\setlist{noitemsep,topsep=0pt,parsep=0pt,partopsep=0pt,leftmargin=*}
\newcommand{\mathsep}{,~}
\newcommand{\set}[1]{\left\lbrace #1 \right\rbrace}
\newcommand{\card}[1]{\left\lvert{#1}\right\rvert}
\newcommand{\norm}[2]{\left\lVert{#1}\right\rVert_{#2}}
\newcommand{\setR}{\mathbb R}
\newcommand{\setN}{\mathbb N}
\DeclareMathOperator*{\argmin}{arg\,min}
\DeclareMathOperator*{\append}{\textsc{append}}
\DeclareMathOperator*{\update}{\textsc{update}}
\DeclareMathOperator*{\shuffle}{\textsc{shuffle}}
\DeclareMathOperator*{\exchange}{\textsc{exchange}}
\newtheorem{definition}{Definition}
\newtheorem{proposition}{Proposition}
\newtheorem{theorem}{Theorem}
\newtheorem{lemma}{Lemma}
\newtheorem{remark}{Remark}
\newcommand{\ALG}{\textsc{Alg}}
\newcommand{\GBT}{\textsc{GBT}}
\newcommand{\cA}{\mathcal A}
\newcommand{\cD}{\mathcal D}
\newcommand{\cE}{\mathcal E}
\newcommand{\cL}{\mathcal L}
\newcommand{\bD}{{\bf D}}
\newcommand{\bbR}{\mathbb R}
\newcommand{\eqdef}{\triangleq}
\newcommand{\debug}[1]{#1}		%
\newcommand{\newop}[2]{\DeclareMathOperator{#1}{\debug{#2}}}		%
\newop{\reg}{\mathcal{R}}
\newop{\setComp}{\mathfrak{R}}
\newcommand{\ab}{a\ominus{} b}
\newcommand{\anbn}{a_n\ominus{} b_n}
\title{Generalizing while preserving monotonicity \\ in comparison-based preference learning models}
\author{%
  Julien Fageot\thanks{Equal contribution.} \\
  Tournesol \\
  \And
  Peva Blanchard${}^{*}$ \\
  Kleis Technology \\
  \And
  Gilles Bareilles \\
  CTU in Prague \\
  \And
  Lê-Nguyên Hoang \\
  Calicarpa, Tournesol \\
}
\begin{document}
\maketitle

\begin{abstract}
    If you tell a learning model 
    that you prefer an alternative $a$ over another alternative $b$,
    then you probably expect the model to be \emph{monotone}, that is, the valuation of $a$ increases, 
    and that of $b$ decreases.
    Yet, perhaps surprisingly, many widely deployed comparison-based preference learning models, including large language models, fail to have this guarantee.
    Until now, the only comparison-based preference learning algorithms 
    that were proved to be monotone are the Generalized Bradley-Terry models~\cite{DBLP:conf/aaai/FageotFHV24}.
    Yet, these models are unable to generalize to uncompared data.
    In this paper, we advance the understanding of the set of models
    with generalization ability that are \emph{monotone}.
    Namely, we propose a new class of Linear Generalized Bradley-Terry models with Diffusion Priors,
    and identify sufficient conditions on alternatives' embeddings 
    that guarantee monotonicity.
    Our experiments show that this monotonicity is far from being a general guarantee,
    and that our new class of generalizing models improves accuracy,
    especially when the dataset is limited.
\end{abstract}

\section{Introduction}
\label{sec:intro}

Preference learning, sometimes known as \emph{alignment}, has become central to machine learning.
In particular, in recent years, there has been a growing interest to leverage comparative judgments to fine-tune AI models, using frameworks like \emph{Reinforcement Learning with Human Feedback} (RLHF)~\cite{DBLP:conf/nips/ChristianoLBMLA17} or \emph{Direct Preference Optimization} (DPO)~\cite{rafailovDirectPreferenceOptimization2024} in the context of language models, or linear models in the context of applications ranging from trolley dilemmas to food donation~\cite{awadMoralMachineExperiment2018,DBLP:journals/pacmhci/LeeKKKYCSNLPP19}.
These models are now deployed at scale.
\gbedit{In parallel, learning preferences from comparisons with mathematical guarantees also fits in social choice theory, contributing to developing more transparent social medial, as advocated recently by ``prosocial media'' \cite{weyl2025prosocialmedia}, with a direct application for collaborative scoring of social media content \cite{hoang2021tournesolquestlargesecure,hoang2024solidagomodularcollaborativescoring}, consensus-driven polling \cite{small2021polis}, or recommender system based on explicit preference such as~\cite{flicke2025scholarinboxpersonalizedpaper}, among others.}

Yet, bizarre aspects of these preference learning algorithms are regularly observed.
\gbreplace{In particular, strikingly, \cite{DBLP:journals/corr/abs-2402-13228,DBLP:journals/corr/abs-2410-08847,bareillesMonotonicityAIAlignment2025} showed that, when they are updated based on a comparative judgment saying that an alternative $a$ is preferable to $b$, the probability of generating item $a$ can in fact decrease.}{A common but striking observation is that, when updating a model based on a comparison judging that an item $a$ is preferable to an item $b$, the probability of $a$ can decrease  \cite{DBLP:journals/corr/abs-2402-13228,DBLP:journals/corr/abs-2410-08847,bareillesMonotonicityAIAlignment2025}.}
In fact, most deployed learning algorithms fail to guarantee \emph{monotonicity}: the probability or score of an item may be reduced after it was said to be better than another item.

Perhaps surprisingly, 
the root cause of this lack of monotonicity is \emph{not} the nonlinearity of the models.
In fact, we can expose this ``bug" with a very basic example.
Consider a linear two-dimensional model with a parameter $\beta \in \setR^2$ to be learned: the scores of items $a$ and $b$ are $\beta^\top x_a$ and $\beta^\top x_b$, where $x_a$ and $x_b$ are (two-dimensional) vector embeddings.
We are given a comparison that favors alternative $a$ over $b$, whose embeddings are $x_a = (1, 0)$ and $x_b = (2, 0)$.
Since $x_{a1} < x_{b1}$, this comparison will push $\beta_1$ towards lower values.
But since the score of $a$ according to the linear model is $\beta^T x_a = \beta_1$,
this means that the score of $a$ will also decrease.
Thus, including a comparison that favors $a$ over $b$ has decreased the score of $a$.
This becomes all the more troubling when there exists an alternative $c$ with embedding $x_c = (0, 1)$, whose score remains unchanged.

This example questions whether preference learning algorithms can be trusted.
A user who witnesses a surprising evolution of alternatives' scores as illustrated above might, understandably, prefer not to use such an algorithm.
Worse yet, if they are nevertheless forced to use the algorithm, they could want to remove the data they previously provided, because the eventual learned model was deteriorated by their truthful data reporting.
More generally, this may discourage users to report their preferences, and rather provide tactical preferences in the hope to steer the model towards their goal.
This is reminiscent of tactical or ``useful'' strategies in voting systems.
\gbreplace{Now, some models, like the (Generalized) Bradley-Terry model, were proved to be monotone~\cite{DBLP:conf/aaai/FageotFHV24}.}{There exist classes of models which have a mathematical guarantee of monotonicity, such as (Generalized) Bradley-Terry models~\cite{DBLP:conf/aaai/FageotFHV24}; see also \cite{DBLP:conf/nips/NoothigattuPP20}.}
\gbreplace{But, to the best of our knowledge, none of them enables score generalization to non-evaluated alternatives.}{To the best of our knowledge, existing models with a guarantee of monotonicty fail to generalize: they cannot predict the score of items that have not been compared.}
Hence the following research problem.
\begin{center}
    \emph{Can a generalizing comparison-based preference learning algorithm guarantee monotonicity?}
\end{center}

\paragraph{Contributions.}
Our first contribution is to identify 
a large class of preference learning algorithms that leverage both \emph{comparisons} between alternative pairs, and \emph{descriptive information} (embeddings) on individual alternatives,
which we call \emph{Linear Generalized Bradley-Terry with Diffusion Prior} (Definition \ref{def:gbt-with-prior-similarities}).
This class extends the (Generalized) Bradley-Terry models \cite{DBLP:conf/aaai/FageotFHV24}
by including a linear mapping of the embeddings---and priors on alternative similarities,
thereby allowing preference \emph{generalization} to yet uncompared alternatives.

As a second contribution, we provide conditions on the embeddings that guarantee that the learning algorithm behaves monotonically when new comparisons are provided.
As discussed above, this property is highly desirable and yet hard to guarantee in practice.
In particular, we propose a class of \emph{diffusion embeddings} that guarantee monotonicity, and for which membership is easy-to-check.
Interestingly, diffusion embeddings yield a very appealing interpretation as heat diffusion dynamics where comparisons play the role of heat pumps.
A direct consequence is that categorical information (one-hot encoding embedding)
yields a monotone learning algorithm.
In particular, this class enables us to provide a positive answer
to our research question.

Finally, we evaluate the statistical performance of our 
learning algorithms through numerical experiments.\pbldelete{ on synthetic datasets.}
Our evaluations show that a linear model with good embeddings and diffusion priors outperforms the classical GBT model~\cite{DBLP:conf/aaai/FageotFHV24}, in particular with limited amount of data.

\paragraph{Related works.}
Learning preferences from comparisons has a long history, 
dating back to Thurstone~\cite{thurstone27}, 
Zermelo~\cite{zermelo1929berechnung},
and Bradley and Terry~\cite{bradley1952rank}.
To handle inconsistent judgments, 
such algorithms define a probabilistic model 
of how latent scoring of alternatives are transformed
into noisy comparisons.
Their approach was adapted by~\cite{luce1959individual} and \cite{plackett1975analysis} to model the selection of one preferred alternative out of several proposed ones; see also \cite{mcfaddenConditionalLogitAnalysis1974} and \cite[Chap. 3]{maddalaLimitedDependentQualitativeVariables1983}.

While the Bradley-Terry model considers binary-valued comparisons, various authors have proposed extensions to real-valued comparisons, e.g., ranging the interval $[-1, 1]$.
Historically, this started with the modeling of draws in games like chess \cite{davidson1970extending}.
More recently, ~\cite{DBLP:journals/corr/abs-1911-11658} 
proposed the platform \texttt{Climpact} where users are given pairs of activities,
and are asked to evaluate the comparative pollutions of two activities.
They then develop a model based on a quadratic error to turn the comparisons
into evaluations of the amounts of pollution of the individual activities.
Their model was then generalized by \cite{DBLP:conf/aaai/FageotFHV24} 
in a framework
they call \emph{Generalized Bradley-Terry} (GBT)
to turn real-valued comparisons into scores.
All these models however consider that each alternative's score
is an independent latent variable to be learned.
Thus, they fail to \emph{generalize} to non-evaluated alternatives.

Independent of user-provided comparisons, alternatives usually come with descriptive information.
\gbreplace{\cite{DBLP:journals/nca/MenkeM08,csiszar2012algorithms,zhao2016deep} proposed to model an alternative's score as a parametrized function of a vector embedding of the description of the alternative.}{A natural idea to generalize is then to model an alternative's score as a parametrized function of a vector embedding of the description of the alternative; see e.g., \cite{DBLP:journals/nca/MenkeM08,csiszar2012algorithms,zhao2016deep,duanGeneralizedModelMultidimensional2017,guIntransitivityModelMatchup2021}.}
Recently, this trick has been widely used in the context of language models~\cite{vaswani2017attention,brown2020language}, especially through algorithms like \emph{Reinforcement Learning with Human Feedback} (RLHF)~\cite{DBLP:conf/nips/ChristianoLBMLA17,DBLP:conf/nips/StiennonO0ZLVRA20}, \emph{Direct Preference Optimization} (DPO)~\cite{dpo}, or \emph{Generalized Preference Optimization} (GPO)~\cite{DBLP:conf/icml/TangGZCMRRVPP24}, to name a few.
In this paper, we restrict ourselves to \emph{linear models}, where an alternative's score is assumed to be a linear function of their embedding.
\gbedit{Application-wise, we focus on social choice applications, and rule out Supervised FineTuning applications.}
Such linear models of preferences trained from comparative judgments have previously been studied and used, e.g. by~\cite{DBLP:conf/ijcai/GuoTKOCCEDI18,DBLP:conf/aaai/NoothigattuGADR18,DBLP:journals/pacmhci/LeeKKKYCSNLPP19}.

The study of the mathematical guarantees of preference learning algorithms
has only emerged recently.
In particular, nonlinear models have been empirically shown to violate
monotonicity properties~\cite{DBLP:conf/nips/ChenMZ0ZRC24,DBLP:journals/corr/abs-2402-13228,DBLP:journals/corr/abs-2410-08847}.
While \cite{bareillesMonotonicityAIAlignment2025} proved that nonlinear models
nevertheless provide a weak monotonicity guarantee they call
\emph{local pairwise monotonicity},
they also suggest that these models are unlikely to verify 
stronger forms of monotonicity. 
Conversely, and quite remarkably, 
\cite{DBLP:conf/aaai/FageotFHV24} proved that
the GBT model guarantees monotonicity for all GBT root laws.
Our model extends GBT in several ways.
\gbdelete{Finally, in addition to AI model alignement, we note that learning preferences from comparisons with mathematical guarantees also contributes to developing more transparent social medial, as advocated recently by "prosocial media" \cite{weyl2025prosocialmedia}.
In particular, this work finds a direct application in the collaborative scoring of social media content \cite{hoang2021tournesolquestlargesecure,hoang2024solidagomodularcollaborativescoring}, and could extend the consensus-driven polling platform Pol.is \cite{small2021polis}, or the scholar recommender system~\cite{flicke2025scholarinboxpersonalizedpaper}.}

\paragraph{Paper structure.}
The rest of the paper is organized as follows.
Section~\ref{sec:monotonicity-formal-definition} introduces the formalism, formally defines monotonicity, and recalls the GBT model.
Section~\ref{sec:linear-gbt-with-diffusion-prior} defines the linear GBT model with diffusion prior,
and states our main results.
Section~\ref{sec:proof} provides the main lines of the proofs of the main results.
Section~\ref{sec:experiments} reports on our experiments.
Section~\ref{sec:conclusion} concludes.

\section{Monotonicity of Scoring Models}
\label{sec:monotonicity-formal-definition}

In this Section, we set notations, formalize the notion of monotonicity, and recall the Generalized Bradley-Terry model.

\subsection{Notations and operations on datasets}
\label{subsec:dataset-type}

Consider a set $\cA$ of $A$ alternatives. For simplicity, we let $\cA \triangleq \set{1, 2, \ldots, A}$.
The set $\setComp \subseteq \mathbb{R}$  denotes the set of admissible comparison values,
which we assume to be symmetric around zero, i.e. $r \in \setComp \iff -r \in \setComp$.
In the classical Bradley-Terry model, we have $\setComp = \set{-1, +1}$.
The generalized Bradley-Terry model allows a wider variety of possible comparison values, for instance $\setComp = [-1, 1]$, or $\setComp = \bbR$ for the uniform and gaussian root laws.
A comparison sample is defined as a triple $(a,b,r)$
where $a,b \in \cA$ are two distinct alternatives, and $r \in \setComp$.
We assume $(a, b, r)$ and $(b, a, -r)$ to be equivalent,
which we write $(a, b, r) \simeq (b, a, -r)$.
By also having $(a, b, r) \simeq (a, b, r)$ (and the relation false otherwise),
we obtain an equivalence relation.
A dataset $\bD$ is a list $\bD : [N] \to \cA^2 \times \setComp$ of comparison samples.
We write $\cD \triangleq \bigcup_{N \in \setN} (\cA^2 \times \setComp)^N$ for the set of datasets,
and $\card{\bD}$ the length of a dataset $\bD \in \cD$.
We now define four parameterized operations $\cD \to \cD$ on datasets.

\paragraph{Exchange.}
For any $n \in \setN$, $\exchange_n(\bD)$ is the dataset obtained from $\bD$ by replacing, if it exists, the $n$-th entry $(a_n, b_n, r_n)$ with $(b_n, a_n, - r_n)$
All other entries are left unchanged.
Assuming that preference learning algorithms should interpret 
these two comparison samples identically,
this operation should not affect training.

\paragraph{Shuffle.}
For any $N \in \setN$ and any permutation $\sigma$ of $[N]$, 
$\shuffle_{N, \sigma}(\bD)$ is the dataset obtained from $\bD$ by reordering its $N$ first elements according to $\sigma$.
Formally, if $\card{\bD} \geq N$, then for all $n \in [N]$ we have $\shuffle_{N, \sigma} (\bD)_n = \bD_{\sigma(n)}$.
Otherwise, $\bD$ is left unchanged.
Assuming that preference learning algorithms should be invariant to shuffling,
this operation should not affect training.

\paragraph{Append.}
For any comparison sample $(a, b, r)$,
$\append_{a,b, r} (\bD)$ is the dataset obtained from $\bD$ by appending $(a, b, r)$.
Formally, we have $\card{\append_{a,b,r} (\bD)} = \card{\bD} + 1$, and $\append_{a,b,r} (\bD)_{\card{\bD} + 1} = (a, b, r)$.
All other entries are the same as in $\bD$.
An append is said to \pbledit{definitely} favor $a'$ over $b'$ 
if it has parameters $(a, b, r) \simeq (a', b', \max \setComp)$.
Note that if $\setComp$ does not have a maximum,
then no append \pbledit{definitely} favors $a'$ over $b'$. 

\paragraph{Update.}
For any $n \in \setN$ and comparison $r \in \setComp$,
$\update_{n, r} (\bD)$ is the dataset obtained from $\bD$ by replacing the comparison of the $n$-th entry with $r$.
The update is said to favor $a$ over $b$ if either (i) $(a_n, b_n) = (a, b)$ and $r \geq r_n$, or (ii) $(a_n, b_n) = (b, a)$ and $r \leq r_n$.
In other words, it favors $a$ over $b$
if it acts on a comparison sample between $a$ and $b$,
and modifies the comparison $r$ to further favor $a$.

\subsection{Monotonicity}

\begin{definition}[Favoring $a$]
An operation $o$ \emph{favors} $a$
if $o$ is a composition of the operations
(i) $\exchange{}$, (ii) $\shuffle{}$, 
(iii) $\append{}$ that \pbledit{definitely} favor $a$ over some other alternative
and (iv) $\update{}$ that favor $a$ over some other alternative.
We write $\bD \preceq_a \bD'$
if there exists an operation $o$ that favors $a$
such that $\bD'= o(\bD)$.
\label{def:favor-a}
\end{definition}

The relation $\preceq_{a}$ is a preorder.
Indeed, $\preceq_{a}$ is reflexive: any dataset $\bD$ equals $o(\bD)$ with $o = \update{}_{n, r}$ with $n=1$, and $r=r_1$. The relation $\preceq_{a}$ is transitive: if $\bD_1 \preceq_a \bD_2$ and $\bD_2 \preceq_a \bD_3$, then there exists operations $o_1$ and $o_2$ that favor $a$, such that $\bD_1 = o_1(\bD_2)$, and $\bD_2=o_2(\bD_3)$; thus $\bD_1=o_1 \circ o_2(\bD_3)$, where $o_1\circ o_2$ is an operation that favors $a$ by Definition 1 so that $\bD_1 \preceq_a \bD_3$.
Similarly, we define the preorder $\leq_a$ over $\setR^A$ by $\theta \leq_a \theta'$ if $\theta_a \leq \theta'_a$ coordinate-wise.
We can now formally define monotonicity.

\begin{definition}[Monotonicity]\label{def:monotonicity}
    The preference learning algorithm $\ALG$ is \emph{monotone} when,
    for every alternative $a \in \cA$,
    $\ALG: (\cD, \preceq_a) \to (\mathbb{R}^A, \le_a)$
    is monotone.
    Equivalently, $\ALG$ is monotone when,
    for every alternative $a \in \cA$,
    $\bD \succeq_a \bD'$ implies $\ALG(\bD) \ge_a \ALG(\bD')$.
\end{definition}

\begin{remark}
    In the sequel, 
    all preference learning algorithms that we will consider 
    are \emph{neutral}~\cite{myerson2013fundamentals},
    i.e. they treat all alternatives symmetrically\footnote{
    Formally, $\ALG(\sigma \cdot \bD) = \sigma \cdot \ALG$
    for all permutations of $\cA$, with the action that applies pointwise
    to all apparitions of an alternative $a \in \cA$.
    }.
    For such algorithms, the monotonicity for any single $a \in \cA$ clearly implies that
    for all $a \in \cA$.
\end{remark}

\subsection{(Generalized) Bradley-Terry and monotonicity}

Here we recall the probabilistic model of GBT \cite{DBLP:conf/aaai/FageotFHV24}, slightly adapting it to our dataset formalism.\footnote{
In \cite{DBLP:conf/aaai/FageotFHV24}, the authors 
consider datasets that contain at most one comparison per pair of alternatives.
}
Following Bradley and Terry~\cite{bradley1952rank},
GBT defines a probabilistic model of comparisons given scores.
Specifically,
given two alternatives $a, b \in \cA$ having scores $\theta_a$ and $\theta_b$,
the probability of observing a value $r$ for the comparison of $a$ relative to $b$ is
\begin{equation}\label{eq:probamodel}
    p(r | \theta_{\ab}) \propto f(r) \cdot \exp(r \cdot \theta_{\ab}),
\end{equation}
where $\theta_{\ab} \eqdef \theta_a - \theta_b$ is the score difference between $a$ and $b$, and $f$ is the \emph{root law}, a probability distribution on $\setComp$ that describes comparisons when $a$ and $b$ have equal scores.

Given a dataset $\bD = (a_n,b_n,r_n)_{n \in [N]}$ of $N$ independent observations following \eqref{eq:probamodel}, and assuming a gaussian prior with zero mean and  $\sigma^2$ variance for each alternative score $\theta_a$, $a \in \cA$, 
the standard Maximum A Posteriori methodology results in the GBT estimator:
\begin{equation}
    \GBT_{f, \sigma}(\bD) \eqdef \argmin_{\theta \in \bbR^\cA} 
    \frac{1}{2\sigma^2} \sum_a \theta_a^2
    + \sum_{(a,b,r) \in \bD} \Phi_f(\theta_{\ab}) -   r \theta_{\ab}.
    \label{eq:original-gbt-loss-function}
\end{equation}
There, $\Phi_f$ is the cumulant-generating function of the root law distribution $f$: $\Phi_f(\theta) \eqdef \log \int_{\setComp} e^{r \theta} df(r)$.
As soon as $f$ has finite exponential moments, $\Phi_f$ is well-defined and convex; in particular, \eqref{eq:original-gbt-loss-function} is a strongly convex problem with a unique minimizer \cite{DBLP:conf/aaai/FageotFHV24}.

\gbreplace{Theorem 2 \cite{DBLP:conf/aaai/FageotFHV24} shows that $\GBT_{f, \sigma}$ is monotone, in a context where two elements can only be compared once.}{We recall below Theorem 2 \cite{DBLP:conf/aaai/FageotFHV24}, that guarantees monotonicity of $\GBT_{f, \sigma}$, when two elements can only be compared once.}
The forthcoming Theorem~\ref{th:monotonicity-with-good-embeddings} extends the result to situations where two elements are compared multiple times.

\begin{proposition}[Th. 2, \cite{DBLP:conf/aaai/FageotFHV24}]
    \gbreplace{For any root law $f$, and $\sigma > 0$, $\GBT_{f, \sigma}$ is monotone.}{Consider a root law $f$, a scalar $\sigma>0$, and two datasets $\bD$, $\bD'$ which contains at most one comparison between any pair $(a, b)\in\cA^2$. Then, for all $a$, $\bD \succeq_a\bD'$ implies $\GBT_{f, \sigma}(\bD) \ge_a \GBT_{f, \sigma}(\bD')$.}
\end{proposition}

Although well behaved in many aspects, the generalized Bradley-Terry model fails to perform generalization: an alternative $a$ that never appears in $\bD$ will receive a nil score $\GBT(\bD)_a = 0$.
However, in practice, alternatives may \emph{(i)} admit informative descriptions, and \emph{(ii)} have known relationships.
This should help guess the score of a yet uncompared alternative, based on the scoring of similar compared alternatives.
We introduce such a learning algorithm in the next Section.

\section{Linear GBT with Diffusion Prior}
\label{sec:linear-gbt-with-diffusion-prior}

In this Section, we introduce a class of preference learning algorithms that incorporate both user comparisons and contextual information on the compared elements,
and state our main result about
their mathematical guarantees on monotonicity.

\subsection{Learning with prior similarities}
\label{sec:learning-with-prior-similarities}

The $\GBT_{\sigma,f}$ model does not include prior knowledge
on the structure of the alternatives.
For example, when alternatives represent videos on YouTube,
the fact that two videos belong to the same channel cannot be
represented in Equation~\eqref{eq:original-gbt-loss-function}.
More generally, Equation~\eqref{eq:original-gbt-loss-function}
does not encode any prior similarities between alternatives.
Consequently, if an alternative $a$ is never compared with any other,
then, even if $a$ is similar to an alternative $b$ that has a large non-zero score,
$a$ will still be assigned a zero score.
In other words, Equation~\eqref{eq:original-gbt-loss-function}
does not allow us to generalize.

To address this issue, we generalize $\GBT_{\sigma, f}$ \eqref{eq:original-gbt-loss-function}
in two directions.
\begin{enumerate}[leftmargin=3ex]
  \item \emph{(Embeddings)} We assume that, to each alternative $a \in \cA$, corresponds an \emph{embedding} $x_a \in \bbR^D$, where $D$ is a positive integer.
    We model the score of alternative $a$ by a linear function of the embedding: \[\theta_a(\beta) \eqdef x_a^T \beta,\] for a parameter $\beta$.
    Denoting $x \in \bbR^{D \times A}$ the matrix collecting all embeddings, and $x_{\ab{}} = x_a - x_b$ for any $a, b \in\cA$, the GBT parameter $\theta\in\bbR^\cA$ is replaced by a linear function $\theta(\beta) = x^T\beta$.
    For instance, in the context of YouTube, $x_a$ could denote a one-hot encoding the content creator identity; more in Section \ref{sec:experiments}.
  \item \emph{(Similarity)} We consider a more general regularization term $\reg(\beta)$
    of the form
    \begin{equation}
      \reg(\beta)
      = \frac{1}{2\sigma^2} \sum_{d} \beta_d^2
      + \frac{1}{2} \sum_{ab} \theta_a(\beta) L_{ab} \theta_b(\beta)
    \end{equation}
    where $L$ is a Laplacian matrix, i.e. such that
    $L_{aa} = \sum_{b\ne a} |L_{ab}| \ge 0$ and $L_{ab} = L_{ba} \le 0$,
    for all $a \ne b$.
    The matrix $L$ can be thought as the Laplacian of
    a graph encoding (prior) similarities between alternatives,
    the weight $|L_{ab}|$ representing the similarity between $a$ and $b$.
    Therefore, the regularization term
    $\sum_{ab} \theta_a(\beta) L_{ab} \theta_b(\beta)
    = \frac{1}{2}\sum_{a \ne b} |L_{ab}|(\theta_a(\beta) - \theta_b(\beta))^2$
    incentivizes the model to (a priori) assign similar scores to similar alternatives.
\end{enumerate}

We can now define the class of GBT models that we will study in this paper.
\begin{definition}[Linear GBT with Diffusion Prior]
    \label{def:gbt-with-prior-similarities}
    Let $f$ be a root law, $x$ be an embedding, $\sigma > 0$ a positive constant, and $L$ a Laplacian matrix.
    The model $\GBT_{f,\sigma,x,L}$ is defined as
            $$\GBT_{f,\sigma,x,L}(\bD) \eqdef x^T \beta^*(\bD) \in \setR^A,$$
    where $\beta^* (\bD) \triangleq \argmin \cL(\cdot | \bD)$ minimizes the strongly convex loss function
    \begin{equation}
    \label{eq:gbt-datasets-loss-function}
        \cL(\beta | \bD) =
        \reg(\beta)
          + \sum_{(a,b,r) \in \bD} \Phi_f(x_{\ab}^T \beta) - r x_{\ab}^T \beta.
    \end{equation}
    For conciness, let $\theta^*(\bD) \eqdef \GBT_{f,\sigma,x,L}(\bD) = x^T \beta^*(\bD)$.
\end{definition}

Remark that the original GBT is a special case of Linear GBT with Diffusion Prior with $A=D$, $x = I$ the identity matrix, and $L = 0$.

\begin{proposition}
\label{prop:neutrality}
    Linear GBT with diffusion prior is \emph{neutral},
    i.e. invariant up to alternative relabeling.
\end{proposition}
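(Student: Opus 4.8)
The plan is first to pin down what ``invariance up to alternative relabeling'' should mean for a model whose very definition, through the embedding $x$ and the Laplacian $L$, is indexed by the alternatives. A relabeling is a permutation $\pi$ of $[A]$. It acts on a dataset by $\pi\cdot\bD \eqdef (\pi(a_n),\pi(b_n),r_n)_{n}$, on a score vector $\theta\in\setR^A$ by $(\pi\cdot\theta)_a = \theta_{\pi^{-1}(a)}$, on the embedding by permuting columns, $(\pi\cdot x)_a = x_{\pi^{-1}(a)}$, and on the Laplacian by $(\pi\cdot L)_{ab}=L_{\pi^{-1}(a)\pi^{-1}(b)}$. Introducing the permutation matrix $P$ with $Pe_b=e_{\pi(b)}$, these read $\pi\cdot\theta=P\theta$, $\pi\cdot x=xP^\top$, and $\pi\cdot L=PLP^\top$, and they are mutually consistent in the sense that $(\pi\cdot x)^\top\beta = Px^\top\beta = \pi\cdot(x^\top\beta)$. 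Since the parameters must be relabeled alongside the data, the precise neutrality statement I would prove is
\[
\GBT_{f,\sigma,\pi\cdot x,\pi\cdot L}(\pi\cdot\bD)=\pi\cdot\GBT_{f,\sigma,x,L}(\bD)
\]
for every permutation $\pi$ and every dataset $\bD$.

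The crux is a single invariance claim: the loss built from the permuted data and permuted parameters equals the original loss as a function of $\beta$, that is $\cL_{\pi\cdot x,\pi\cdot L}(\beta\mid\pi\cdot\bD)=\cL_{x,L}(\beta\mid\bD)$ for all $\beta\in\setR^D$. I would verify this termwise. The ridge term $\tfrac{1}{2\sigma^2}\|\beta\|^2$ never references the alternatives and is unchanged. For the Laplacian term I would write $\tfrac12\sum_{ab}\theta_aL_{ab}\theta_b=\tfrac12\beta^\top xLx^\top\beta$ and compute, using $P^\top P=I$, that $(\pi\cdot x)(\pi\cdot L)(\pi\cdot x)^\top = xP^\top\,PLP^\top\,Px^\top = xLx^\top$, so it too is invariant. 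For the data term, the key observation is that the difference embedding is preserved under the joint relabeling: $(\pi\cdot x)_{\pi(a)\pi(b)}=(\pi\cdot x)_{\pi(a)}-(\pi\cdot x)_{\pi(b)}=x_a-x_b=x_{ab}$. Because $\pi\cdot\bD$ relabels each entry index-by-index, every summand $\Phi_f(x_{ab}^\top\beta)-r\,x_{ab}^\top\beta$ reappears unchanged, so the whole data sum is invariant.

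With the two losses identical as functions of $\beta$, they share the same minimizer, and uniqueness is guaranteed by the strong convexity noted after Definition~\ref{def:gbt-with-prior-similarities}; hence $\beta^*(\pi\cdot\bD)$ computed for the permuted parameters equals $\beta^*(\bD)$ computed for the original ones. I would then push this through the output map:
\[
\GBT_{f,\sigma,\pi\cdot x,\pi\cdot L}(\pi\cdot\bD)=(\pi\cdot x)^\top\beta^*(\bD)=Px^\top\beta^*(\bD)=P\cdot\GBT_{f,\sigma,x,L}(\bD)=\pi\cdot\GBT_{f,\sigma,x,L}(\bD),
\]
which is exactly the desired neutrality.

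I expect the only genuine obstacle to be conceptual rather than computational: recognizing that neutrality here is a \emph{joint} symmetry in which $x$ and $L$ are relabeled together with the data (not a symmetry of a fixed $(x,L)$, which would be false for generic embeddings), and fixing the action conventions so that $\pi\cdot\theta=P\theta$, $\pi\cdot x=xP^\top$, and $\pi\cdot L=PLP^\top$ are consistent. Once the bookkeeping is in place, each of the three invariance checks is a one-line matrix identity.
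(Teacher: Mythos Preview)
Your proof is correct and follows essentially the same route as the paper's: define the permutation action on $\bD$, $\theta$, $x$, and $L$, verify term by term that the loss is invariant under the joint relabeling, and conclude via uniqueness of the minimizer that the output scores are permuted accordingly. The only superficial differences are that you use the pushforward convention $(\pi\cdot\theta)_a=\theta_{\pi^{-1}(a)}$ (which in fact matches the footnote in Section~\ref{sec:monotonicity-formal-definition}) whereas the appendix uses the pullback convention $(\tau\cdot\theta)_a=\theta_{\tau(a)}$, and that you phrase the invariances via permutation matrices rather than coordinate-wise; neither changes the substance of the argument.
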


\begin{proof}
    See Appendix~\ref{app:neutrality} for a formal statement and derivation.
\end{proof}

\subsection{Monotonicity and diffusion}
\label{sec:monotonicity-diffusion}

We now present our main result (Theorem~\ref{th:diffusion-embedding-is-good}).
We prove that for a special class of embeddings, namely \emph{diffusion embeddings},
monotonicity is guaranteed.
Diffusion embeddings take their name from 
the interplay with (super) laplacian matrices.

\begin{definition}[Super-Laplacian matrix]
   A super-Laplacian matrix
   $\Delta$ is a symmetric matrix 
   such that for all $a \ne b$,
       $\Delta_{aa} > - \sum_{b \ne a} \Delta_{ab}$
       and $\Delta_{ab} \le 0$.
\end{definition}

\begin{definition}[Diffusion embedding]
    An embedding $x$ is a diffusion embedding
    if the Gram matrices $X_\lambda = x^Tx + \lambda I$
    have super-Laplacian inverses $X_\lambda^{-1}$ for any $\lambda > 0$.
\end{definition}

Note that if $X = x^T x$ is itself invertible with super-Laplacian inverse, then it is a diffusion embedding. However, this case is restrictive since it implies that $D\geq A$.

\begin{theorem}[Monotonicity with diffusion embeddings]
    \label{th:diffusion-embedding-is-good}
    For any  root law $f$, 
    positive constant $\sigma > 0$, diffusion embedding $x$,
    and Laplacian matrix $L$,
    $\GBT_{f,\sigma,x,L}$ is monotone.
\end{theorem}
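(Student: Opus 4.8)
The plan is to first reduce the statement to a single comparative–statics inequality. By neutrality (Proposition~\ref{prop:neutrality}) it suffices to prove monotonicity for one fixed alternative $a$, so I only need to check that $\theta_a = x_a^T\beta^*(\bD)$ cannot decrease under each generator of $\preceq_a$. The operations $\shuffle$ and $\exchange$ leave the objective $\cL(\cdot\mid\bD)$ unchanged (the loss is a sum over samples, and it depends on each sample only through its equivalence class, using that the root law is symmetric so $\Phi_f$ is even), hence leave $\beta^*$ and $\theta_a$ unchanged. For the two substantive generators I would use smooth comparative statics: since $\cL$ is strongly convex and $\Phi_f$ is smooth, the implicit function theorem makes $\beta^*$ a $C^1$ function of the driving scalar. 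Differentiating the first–order condition $\frac{1}{\sigma^2}\beta^* + xLx^T\beta^* + \sum_m(\Phi_f'(x_{a_mb_m}^T\beta^*)-r_m)\,x_{a_mb_m}=0$ with respect to the comparison value $r_n$ of a sample $(a,b_n,\cdot)$ gives $\tfrac{d\beta^*}{dr_n}=H^{-1}x_{ab_n}$, where $H=\nabla^2_\beta\cL$; hence $\tfrac{d\theta_a}{dr_n}=x_a^TH^{-1}x_{ab_n}=M_{aa}-M_{ab_n}$ with $M\eqdef x^T H^{-1}x$. For an $\append$ infinitely favoring $a$ over $b'$ I would use the homotopy $\cL_t=\cL(\cdot\mid\bD)+t\,[\Phi_f(x_{ab'}^T\beta)-\max\cR\cdot x_{ab'}^T\beta]$, $t\in[0,1]$, whose derivative of $\theta_a$ equals the nonnegative prefactor $(\max\cR-\Phi_f'(\cdot))$ times the same quantity $M_{aa}-M_{ab'}$. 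Thus everything reduces to proving $M_{aa}\ge M_{ab}$ for all $a,b$.

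The second step is to expose the algebraic structure of $M$. Writing $\alpha\eqdef 1/\sigma^2$, the Hessian is $H=\alpha I_D + xLx^T+\sum_m\Phi_f''(\cdot)\,x_{a_mb_m}x_{a_mb_m}^T$. Since $x_{a_mb_m}=x(e_{a_m}-e_{b_m})$ and $\Phi_f''\ge 0$, the data term equals $xL_\bD x^T$ for a weighted graph Laplacian $L_\bD$, so $H=\alpha I_D + x\tilde L x^T$ with $\tilde L\eqdef L+L_\bD$ again a Laplacian. The push–through identity $H x = x(\alpha I_A+\tilde L X)$, where $X\eqdef x^T x$, yields $H^{-1}x=x(\alpha I_A+\tilde L X)^{-1}$ and therefore
\[
  M \;=\; x^T H^{-1}x \;=\; X(\alpha I_A+\tilde L X)^{-1},
\]
with $\alpha I_A+\tilde L X$ invertible because its eigenvalues are $\ge\alpha>0$.

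The main obstacle is the purely linear–algebraic lemma that an inverse super-Laplacian has a row-wise dominant diagonal: if $N$ is a symmetric super-Laplacian then $(N^{-1})_{aa}\ge (N^{-1})_{ab}$ for all $a,b$. I would prove it by a discrete maximum principle. Set $v=N^{-1}(e_a-e_b)$, let $\delta\eqdef N\mathbf 1>0$ be the (positive) vector of row sums, and note the identity $\sum_c\delta_c v_c=\mathbf 1^T N v=\mathbf 1^T(e_a-e_b)=0$. Writing each equation as $\sum_{d\ne c}|N_{cd}|(v_c-v_d)+\delta_c v_c=(e_a-e_b)_c$ and evaluating at a maximizer $c^\ast$ of $v$ forces $\delta_{c^\ast}v_{c^\ast}\le (e_a-e_b)_{c^\ast}$; if $c^\ast\ne a$ this gives $v_{c^\ast}\le 0$, i.e.\ $\max_c v_c\le0$, which is incompatible with $\sum_c\delta_c v_c=0$ and $v\ne 0$. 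Hence the maximum is attained at $a$, and the weighted-average identity then gives $v_a=\max_c v_c\ge 0$, that is $M_{aa}\ge M_{ab}$.

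The final step is to discharge the singular–$X$ case via the diffusion hypothesis and a limit. For $\lambda>0$ set $X_\lambda\eqdef X+\lambda I$ and $M_\lambda\eqdef X_\lambda(\alpha I_A+\tilde L X_\lambda)^{-1}$; then $M_\lambda^{-1}=\alpha X_\lambda^{-1}+\tilde L$, which is a super-Laplacian because $X_\lambda^{-1}$ is super-Laplacian by the diffusion-embedding assumption (scaling by $\alpha>0$ preserves this, and adding the Laplacian $\tilde L$ keeps nonpositive off-diagonals while only strengthening strict diagonal dominance). The lemma gives $(M_\lambda)_{aa}\ge (M_\lambda)_{ab}$, and letting $\lambda\to 0^+$ (continuity of the inverse, whose eigenvalues stay $\ge\alpha$) yields $M_{aa}\ge M_{ab}$ for the actual $M$. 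Integrating the nonnegative derivatives of $\theta_a$ along each generator then shows $\theta_a$ is nondecreasing under every operation favoring $a$, and composing over the generators of $\preceq_a$ proves that $\GBT_{f,\sigma,x,L}$ is monotone. I expect the maximum-principle lemma together with the careful bookkeeping that keeps $\tilde L$ a Laplacian and $M_\lambda^{-1}$ a super-Laplacian to be the delicate part; the comparative statics and the limit argument are comparatively routine.
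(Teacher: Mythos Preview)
Your argument is correct and follows essentially the same route as the paper: the same comparative-statics reduction via a smoothened loss (the paper's Lemma~\ref{lem:derivatives-datasets-operations}), the same identification of the key matrix $x^T H^{-1}x=(I+XY)^{-1}X$ with $Y$ a Laplacian (the paper's ``good embedding'' condition, Definition~\ref{def:good-embeddings} and Theorem~\ref{th:monotonicity-with-good-embeddings}), and the same use of the diffusion hypothesis together with the limit $\lambda\to 0$ (Proposition~\ref{prop:diffusion-is-good}). The only substantive difference is your proof of the auxiliary inequality $(N^{-1})_{aa}\ge(N^{-1})_{ab}$ for super-Laplacian $N$: the paper argues via an absorbing random walk (Appendix~\ref{app:prooflemmasuperlaplacian}), while your discrete maximum-principle argument is a clean and equally valid alternative. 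One small imprecision worth tightening: in the $\append$ homotopy the Hessian acquires the extra term $t\,\Phi_f''(\cdot)\,x_{ab'}x_{ab'}^T$, so the matrix $M$ depends on $t$; this is harmless since your diagonal-dominance step is proved for \emph{every} Laplacian $\tilde L$, but the phrase ``the same quantity $M_{aa}-M_{ab'}$'' should be read accordingly.
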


\begin{proof}
    The theorem follows directly from Proposition~\ref{prop:diffusion-is-good} and Theorem~\ref{th:monotonicity-with-good-embeddings}, which are provided and proved in Section~\ref{sec:proof}.
\end{proof}

\subsection{Example: one-hot encoding}
\label{subsec:onehotencoding}

A one-hot encoding
is possible when the alternatives can be arranged into multiple disjoint
classes. For example, if the alternatives represent videos on YouTube,
one can partition them by the YouTube channel they belong to.
In that case,
the score of an alternative $a$ is defined as $\theta_a = \gamma_{d(a)} + s^2 \cdot \alpha_a$,
where $d(a)$ is the channel of $a$.
The score $\gamma_{d(a)}$ represents the score of the channel $d(a)$,
while $\alpha_a$ represents a residual score of $a$,
and $s$ is a real constant that controls the scale 
of the residual score. 
Theorem~\ref{theo:onehotencoding} states
a one-hot encoding is an example of diffusion embedding.
We postpone the proof to Appendix \ref{app:theooneshot}.

\begin{theorem}[GBT with one-hot encoding]
\label{theo:onehotencoding}
    Let $f$ be a root law,
    $\sigma > 0$ a positive constant,
    $L$ a Laplacian matrix and $s \in \mathbb{R}$.
    Let $x : \mathbb{R}^{D \times \cA}$
    be a \emph{one-hot encoding} matrix: $x_{da} = 1$ if, and only if, $a$ belongs to $d$.
    Then, for any real number $s$, 
    $\begin{pmatrix} x & sI \end{pmatrix}^T$ is a diffusion embedding
    and the score $\GBT_{f,\sigma,x,L}$ is monotone.
\end{theorem}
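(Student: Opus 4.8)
The plan is to reduce the whole statement to the diffusion-embedding property. Writing $\tilde x \eqdef \begin{pmatrix} x & sI \end{pmatrix}^T$ for the augmented embedding, once $\tilde x$ is shown to be a diffusion embedding, monotonicity of $\GBT_{f,\sigma,\tilde x,L}$ is immediate from Theorem~\ref{th:diffusion-embedding-is-good}, which already handles an arbitrary root law $f$, an arbitrary $\sigma>0$, and an arbitrary Laplacian $L$. So the entire work is to verify that the regularized Gram matrices $X_\lambda = \tilde x^T \tilde x + \lambda I$ have super-Laplacian inverses for every $\lambda > 0$.

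First I would compute the Gram matrix explicitly. Appending the block $sI$ contributes $s^2 I$ to the Gram matrix, so $\tilde x^T \tilde x = x^T x + s^2 I$ and hence $X_\lambda = x^T x + (s^2 + \lambda)\, I$. Because $x$ is a one-hot class encoding, $(x^T x)_{ab}$ equals $1$ when $a$ and $b$ lie in the same class and $0$ otherwise; after grouping alternatives by class, $x^T x$ is block-diagonal, each class of size $n$ contributing the all-ones block $\mathbf{1}\mathbf{1}^T$. Consequently $X_\lambda$ is block-diagonal with blocks $B = \mathbf{1}\mathbf{1}^T + c\,I_n$ where $c \eqdef s^2 + \lambda > 0$. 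Since the inverse of a block-diagonal matrix is block-diagonal with the inverted blocks (so all cross-class entries of $X_\lambda^{-1}$ vanish), it suffices to analyze one block.

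Next I would invert each block via the Sherman–Morrison formula, giving $B^{-1} = \tfrac{1}{c} I_n - \tfrac{1}{c(c+n)}\mathbf{1}\mathbf{1}^T$. This yields within-class off-diagonal entries $-\tfrac{1}{c(c+n)} < 0$, diagonal entries $\tfrac{c+n-1}{c(c+n)}$, and cross-class entries $0 \le 0$. Off-diagonal nonpositivity of $X_\lambda^{-1}$ is then clear, and the strict diagonal-dominance condition $(X_\lambda^{-1})_{aa} > -\sum_{b \neq a}(X_\lambda^{-1})_{ab}$ reduces, within a class of size $n$, to $\tfrac{c+n-1}{c(c+n)} > (n-1)\,\tfrac{1}{c(c+n)}$, i.e. to $c > 0$, which holds since $\lambda > 0$. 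Thus $X_\lambda^{-1}$ is super-Laplacian for all $\lambda > 0$, $\tilde x$ is a diffusion embedding, and Theorem~\ref{th:diffusion-embedding-is-good} delivers monotonicity of $\GBT_{f,\sigma,\tilde x,L}$.

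The only real content lies in the inversion step together with checking that strict diagonal dominance survives: the inequality collapses exactly to $c = s^2 + \lambda > 0$. This makes the role of the strict positivity $\lambda > 0$ in the definition of diffusion embedding transparent—at $\lambda = 0$ with $s = 0$ the matrix $x^T x$ is singular (same-class alternatives are indistinguishable), so the $\lambda$-regularization is what guarantees invertibility and the strict dominance. Everything else is bookkeeping about the block structure induced by the disjoint classes.
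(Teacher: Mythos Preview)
Your proposal is correct and follows essentially the same approach as the paper: compute $\tilde x^T\tilde x + \lambda I = x^T x + (s^2+\lambda)I$, observe it is block-diagonal with blocks $cI_n + \mathbf{1}\mathbf{1}^T$, invert each block to get $\tfrac{1}{c}I_n - \tfrac{1}{c(c+n)}\mathbf{1}\mathbf{1}^T$, and read off the super-Laplacian property before invoking Theorem~\ref{th:diffusion-embedding-is-good}. The only cosmetic difference is that the paper verifies the inverse formula directly rather than naming Sherman--Morrison, and it asserts the super-Laplacian property without writing out the diagonal-dominance inequality that you reduce to $c>0$; your version is in fact slightly more explicit on that point.
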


\section{The proof}
\label{sec:proof}

This Section provides the mathematical analysis that builds to the proof of the main result, Theorem~\ref{th:diffusion-embedding-is-good}.
Section \ref{sec:differential-analysis} proposes a differential analysis framework for the dataset operations outlined above; Section \ref{sec:monotproofemb} then provides the proof.

\subsection{Differential analysis of dataset operations}
\label{sec:differential-analysis}

The goal of this technical section
is to connect the discrete domain of datasets
with tools from differential analysis.
Studying the monotonicity (Definition~\ref{def:monotonicity})
of $\theta^*(\bD)$ requires to compare the loss functions
for datasets that are related by a basic operation.
Given that the loss is invariant under $\exchange{}$ and $\shuffle{}$ operations
on the dataset $\bD$,
on one hand because of the specific form of the GBT loss,
and on the other because it features a sum of comparison samples of the dataset,
the same invariance holds for $\theta^*$.
Thus, to prove monotonicity,
it suffices to study what happens under $\append{}$ and $\update{}$ operations
that favor $a$ over $b$.

Because the loss function is a sum of terms indexed 
by the elements of the dataset, this relation is quite simple
\begin{align}
    \cL(\beta | \append{}_{a,b,r}(\bD)) &= \cL(\beta | \bD)
    + \Phi_f(\theta_{\ab}(\beta)) - r \theta_{\ab}(\beta) \\
    \cL(\beta | \update{}_{n,r}(\bD)) &= \cL(\beta | \bD)
     - (r - r_n) \theta_{\anbn}(\beta)
\end{align}
To enable the differential analysis of these operations,
we introduce a smooth deformation of the loss function.
\begin{definition}[Smoothed loss]
  For every $\lambda \in \mathbb{R}$,
  and every operation $o$ of the form
  $\append_{a, b, r}$ or $\update_{n, r}$, %
  we define the smoothed loss $\cL_\lambda$ by
  \begin{equation}
      \cL_\lambda (\beta | \bD, o) \triangleq 
      \cL(\beta | \bD) + 
      \lambda \cdot
      \begin{cases}
      \Phi_f(\theta_{\ab}(\beta)) - r\theta_{\ab}(\beta) &\text{if } o = \append_{a, b, r}, \\
      - (r - r_n) \cdot \theta_{\anbn} (\beta) &\text{if } o = \update_{n, r}, \\
      0 &\text{otherwise.}
      \end{cases}
  \end{equation}
  Denote also $\beta^*_\lambda(\bD, o) \triangleq \argmin \cL_\lambda(\cdot | \bD, o)$
  and $\theta^*_\lambda(\bD, o) \triangleq \theta(\beta^*_\lambda(\bD, o))$.
\end{definition}

The smoothed loss matches the loss at $\lambda \in \set{0, 1}$,
  as $\cL_0(\beta | \bD, o) = \cL(\beta | \bD)$
  and $\cL_1(\beta | \bD, o) = \cL(\beta | o(\bD))$.
We will leverage this by using
the integral expression
\begin{equation}
    \theta^*(o(\bD)) - \theta^*(\bD)
    = \int_0^1 \frac{d \theta^*_{\lambda }}{d\lambda}(\bD, o) d\lambda.
\end{equation}
When $\lambda \mapsto \theta^*_\lambda(\bD, o)$
is continuously differentiable,
this integral expression is well-defined,
and it suffices that the derivative $d\theta^*_{\lambda a}(\bD,o)/d\lambda$ be non-negative 
for the score difference at $a$ to be non-negative.
Lemma \ref{lem:derivatives-datasets-operations} states that this derivative is well defined 
and provides a formula. The proof is given in Appendix \ref{app:differential}.

\begin{lemma}
    \label{lem:derivatives-datasets-operations}
    Let $H = H(\theta | \bD)$ denote 
    the Hessian of 
    $\cE(\theta | \bD) = \sum_{(a,b,r)\in\bD} \Phi_f(\theta_{\ab})$, and $X = \sigma^2 x^Tx$ denote
    the (scaled) Gram matrix of the embedding $x$.
    Then, for any basic operation $o$ and dataset $\bD$,
    the loss function \gbreplace{$\cL(\beta | o_\lambda(\bD))$}{$\cL_\lambda ( \cdot | \bD, o)$}
    admits a unique global minimizer
    $\beta^*_\lambda(\bD,o)$,
    and the inferred score 
    $\theta^*_\lambda(\bD,o)$
    is a smooth function of $\lambda$ over $[0,\infty)$.
    Moreover,
    \begin{itemize}
      \item if $o = \update_{n, r}$ and $\bD_n \simeq (a,b,s)$,
    then the score of the alternative $a$ satisfies
    \begin{equation}
        \frac{d \theta^*_{\lambda a}}{d\lambda}(\bD,o)\bigg|_{\lambda = \mu} =
        (r - s) \cdot
        e_a^T \big( I + X(L + H) \big)^{-1} X e_{\ab{}}.
        \label{eq:update-derivative}
    \end{equation}
    \gbedit{There, $e_a$ denotes the $a$-th vector of the cartesian basis of $\bbR^A$, $\theta^*_{\lambda a}$ denotes the $a$-th coordinate of $\theta^*_\lambda$, and $e_{\ab{}} = e_a-e_b$.}
      \item if $o = \append_{a, b, r}$,
        then the score of the alternative $a$ satisfies
    \begin{equation}
        \frac{d\theta^*_{\lambda a}(\bD,o)}{d\lambda}\bigg|_{\lambda = \mu} =
        (r - \Phi'_f(\theta^*_{\ab})) \cdot
        e_a^T \big( I + X(L + H + \mu \Phi_f''(\theta_{\ab}) \cdot S^{ab})  \big)^{-1} X e_{\ab{}},
        \label{eq:append-derivative}
    \end{equation}
    where $S^{ab}\in\bbR^{A\times A}$ is the Laplacian matrix of the graph over the alternatives
    with a single edge $ab$ (with weight $1$), i.e. $S^{ab}_{aa} = S^{ab}_{bb} = 1$, $S^{ab}_{ab} = S^{ab}_{ba} = -1$, and $S^{ab}_{cd} = 0$ otherwise.
    \end{itemize}
\end{lemma}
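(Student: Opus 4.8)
The plan is to lift the problem into the parameter space $\bbR^D$ of $\beta$, where $\cL_\lambda$ is genuinely smooth and strongly convex, carry out the differentiation there, and transport the result back to score space through the linear map $\theta = x^T\beta$. First I would record that $\beta^*_\lambda(\bD,o)$ is well defined for every $\lambda$ in the relevant range: the ridge term contributes $\tfrac{1}{\sigma^2}I_D$ to the $\beta$-Hessian, while the graph-Laplacian term $\tfrac12\theta^TL\theta = \tfrac12\beta^T xLx^T\beta$ and each $\Phi_f$ term are convex (recall that $\Phi_f$ is a cumulant-generating function, so $\Phi_f''\ge 0$, being a variance). For an $\append$ the perturbation $\lambda(\Phi_f(\theta_{ab})-r\theta_{ab})$ is convex whenever $\lambda\ge 0$, and for an $\update$ it is affine in $\beta$; in both cases $\cL_\lambda(\,\cdot\,|\,\bD,o)$ remains $\tfrac{1}{\sigma^2}$-strongly convex and coercive, hence has a unique minimizer. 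Smoothness of $\lambda\mapsto\beta^*_\lambda$ then follows from the implicit function theorem applied to the stationarity equation $\nabla_\beta\cL_\lambda(\beta^*_\lambda\,|\,\bD,o)=0$: the map $(\beta,\lambda)\mapsto\nabla_\beta\cL_\lambda$ is smooth (as $\Phi_f$ is analytic where the exponential moments are finite) and its $\beta$-Jacobian, the Hessian, is uniformly positive definite, so $\beta^*_\lambda$ and hence $\theta^*_\lambda = x^T\beta^*_\lambda$ are $C^\infty$ on $[0,\infty)$.

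Differentiating the stationarity equation in $\lambda$ gives $\tfrac{d\beta^*_\lambda}{d\lambda} = -\big(\nabla^2_\beta\cL_\lambda\big)^{-1}\,\partial_\lambda\nabla_\beta\cL_\lambda$, whence $\tfrac{d\theta^*_\lambda}{d\lambda} = -x^T\big(\nabla^2_\beta\cL_\lambda\big)^{-1}\partial_\lambda\nabla_\beta\cL_\lambda$. The two ingredients are computed directly. Writing $e_{ab}=e_a-e_b$ and $x_{ab}=xe_{ab}$, the Hessian is
\[
\nabla^2_\beta\cL_\lambda = \tfrac{1}{\sigma^2}I_D + x(L+H)x^T + \lambda\,\Phi_f''(\theta_{ab})\,x S_{ab}x^T,
\]
where the identity $\sum\Phi_f''(\theta_{ab})\,x_{ab}x_{ab}^T = xHx^T$ recovers the score-space Hessian $H=\sum\Phi_f''(\theta_{ab})\,e_{ab}e_{ab}^T$ of $\cE$, and the last term (present only for $\append$, since the $\update$ perturbation is affine and contributes no curvature) uses $e_{ab}e_{ab}^T=S_{ab}$. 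The cross-derivative is $\partial_\lambda\nabla_\beta\cL_\lambda = -(r-s)\,xe_{ab}$ for $\update$ and $\partial_\lambda\nabla_\beta\cL_\lambda = (\Phi'_f(\theta^*_{ab})-r)\,xe_{ab}$ for $\append$.

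Substituting at $\lambda=\mu$ and factoring $\tfrac{1}{\sigma^2}I_D + xMx^T = \tfrac{1}{\sigma^2}(I_D+\sigma^2 xMx^T)$ with $M=L+H$ in the update case and $M=L+H+\mu\,\Phi_f''(\theta_{ab})\,S_{ab}$ in the append case, the key step is the dimension-reducing push-through identity
\[
\sigma^2\, x^T(I_D+\sigma^2 xMx^T)^{-1}x = (I_A+XM)^{-1}X,\qquad X=\sigma^2 x^Tx,
\]
which follows from $(I_D+UV)^{-1}U=U(I_A+VU)^{-1}$ applied with $U=x$ and $V=\sigma^2 Mx^T$, combined with the elementary rearrangement $X(I+MX)^{-1}=(I+XM)^{-1}X$. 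Reading off the $a$-th coordinate yields exactly \eqref{eq:update-derivative} and \eqref{eq:append-derivative}.

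I expect the main obstacle to be precisely this transport between the $D$-dimensional $\beta$-Hessian and the $A$-dimensional score-space expression: the target formula is naturally stated in score space via $X$, $L$, and $H$, whereas the convexity and invertibility that justify the computation live in $\beta$-space, where $x$ need not have full rank and $\nabla^2_\beta\cL_\lambda$ is only positive definite thanks to the ridge term. Reconciling the two sides—tracking the $\sigma^2$ scaling correctly and invoking the push-through identity rather than a naive inverse—is the step that requires genuine care; the strong convexity and implicit-function arguments, by contrast, are routine once the setup is in place.
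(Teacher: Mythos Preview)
Your proposal is correct and follows essentially the same approach as the paper: establish strong convexity in $\beta$-space, invoke the implicit function theorem on the stationarity equation, and then collapse $x^T\bigl(\tfrac{1}{\sigma^2}I_D + xMx^T\bigr)^{-1}x$ to $(I_A+XM)^{-1}X$. The only cosmetic difference is that the paper performs this last reduction via Woodbury's identity and a short chain of simplifications, whereas you invoke the push-through identity $(I+UV)^{-1}U=U(I+VU)^{-1}$ directly; the two routes are equivalent.
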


We are interested in the sign (positive or negative) of the expressions
in Equations~\eqref{eq:update-derivative} and~\eqref{eq:append-derivative}.
First, the factors $r - s$ 
and $r - \Phi'_f(\theta^*_{\ab})$ are easy to understand.
If $o = \update_{n, r}$ favors $a$ over $b$ then $r - s \ge 0$ by definition.
If $o = \append_{a, b, r}$ favors $a$ over $b$,
then $r = \sup \setComp$ which is the supremum of $\Phi'_f$ \cite[Theorem 1]{DBLP:conf/aaai/FageotFHV24}.

Therefore, if we want to compare the scores of $a$ and $b$, the important factor to study is the matrix $(I + X(L + \tilde{H}))^{-1}X$, where $\tilde{H} = H + \mu \Phi_f''(\theta_{\ab}) \cdot S^{ab}$ if $o = \append_{a, b, r}$, or $\tilde{H} = H$ if $o = \update_{n, r}$ and $\bD_n \simeq (a, b, s)$.
We study this matrix in the next section.

\subsection{Good embeddings: a sufficient condition for monotonicity}
\label{sec:monotproofemb}

In this Section, we provide a sufficient condition on the embedding for the Linear GBT model to be monotone (Definition \ref{def:good-embeddings}, Theorem \ref{th:monotonicity-with-good-embeddings}), and provide mathematical properties of this condition (Proposition \ref{prop:assymptoticdelta}). 
Finally, we show that diffusion embeddings meet the above sufficient condition (Proposition \ref{prop:diffusion-is-good}).

\begin{definition}[Good embeddings]
    Given a Laplacian matrix $Y$, 
    an embedding $x$ is \emph{$Y$-good}
    if the Gram matrix $X =  x^Tx$
    satisfies $e_a^T (I + XY)^{-1}X e_{\ab{}} \ge 0$ for all $(ab)$.
    An embedding $x$ is \emph{good} if $x$ is $Y$-good 
    for all Laplacian matrices $Y$.
    \label{def:good-embeddings}
\end{definition}

\begin{theorem}[Monotonicity with good embeddings]
    \label{th:monotonicity-with-good-embeddings}
    For any root law $f$, positive constant $\sigma > 0$, 
    Laplacian matrix $L$, and good embedding $x$,
    $\GBT_{f,\sigma,x,L}$ is monotone.
\end{theorem}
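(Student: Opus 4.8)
The plan is to reduce the claim to single $\append$ and $\update$ operations and then exploit the derivative formulas of Lemma~\ref{lem:derivatives-datasets-operations}. By Definition~\ref{def:favor-a}, any operation favoring $a$ is a composition of $\exchange$, $\shuffle$, $\append$ infinitely favoring $a$, and $\update$ favoring $a$; since $\preceq_a$ is a preorder and $\GBT_{f,\sigma,x,L}$ is invariant under $\exchange$ and $\shuffle$ (the loss \eqref{eq:gbt-datasets-loss-function} is a shuffle-invariant sum whose summands are individually $\exchange$-invariant), it suffices by transitivity to show that a single $\append$ or $\update$ favoring $a$ never decreases $\theta^*_a$. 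First I would write, for such an operation $o$,
\[
    \theta^*[o(\bD)]_a - \theta^*[\bD]_a = \int_0^1 \frac{d\theta^*_{\lambda a}(\bD,o)}{d\lambda}\, d\lambda,
\]
which is legitimate because Lemma~\ref{lem:derivatives-datasets-operations} guarantees $\lambda \mapsto \theta^*_\lambda(\bD,o)$ is smooth on $[0,\infty)$. It then suffices to prove the integrand is non-negative for every $\mu \in [0,1]$.

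Next I would analyze the sign of the derivative via \eqref{eq:update-derivative} and \eqref{eq:append-derivative}, each of which factors as a scalar times $e_a^T(I + X(L+\tilde H))^{-1}Xe_{ab}$. The scalar is non-negative whenever $o$ favors $a$: for $\update$ it equals $r - s \ge 0$ by definition, and for $\append$ it equals $\sup\cR - \Phi'_f(\theta^*_{ab}) \ge 0$, since $\sup\cR$ is the supremum of $\Phi'_f$ by \cite[Theorem 1]{DBLP:conf/aaai/FageotFHV24}. What remains is to show the matrix factor is non-negative, which is precisely the $(L+\tilde H)$-goodness condition of Definition~\ref{def:good-embeddings}.

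The crux, and the step I expect to be the main obstacle, is verifying that $Y \triangleq L + \tilde H$ is a Laplacian matrix, so that the \emph{good} embedding hypothesis (which grants $Y$-goodness for \emph{all} Laplacian $Y$) applies; here $\tilde H = H$ for $\update$ and $\tilde H = H + \mu\Phi_f''(\theta_{ab})S_{ab}$ for $\append$. I would compute the Hessian explicitly: since $\cE(\theta\mid\bD) = \sum_{(a,b,r)\in\bD}\Phi_f(\theta_{ab})$, one obtains $H = \sum_{(a,b,r)\in\bD}\Phi_f''(\theta_{ab})\,S_{ab}$, a sum of single-edge Laplacians $S_{ab}$. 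Because $\Phi_f$ is convex we have $\Phi_f''\ge 0$, so $H$ is a non-negative combination of Laplacians, hence Laplacian; the extra $\append$ term $\mu\Phi_f''(\theta_{ab})S_{ab}$ (with $\mu\ge 0$) is likewise a non-negative multiple of a Laplacian. As $L$ is Laplacian by hypothesis and the Laplacian matrices form a convex cone stable under addition and positive scaling, $Y$ is Laplacian.

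Finally I would invoke the good embedding hypothesis for the Laplacian $Y$ just constructed. One bookkeeping point is that Lemma~\ref{lem:derivatives-datasets-operations} uses the scaled Gram matrix $X = \sigma^2 x^Tx$ while Definition~\ref{def:good-embeddings} uses $X_0 = x^Tx$; factoring out $\sigma^2$ rewrites $e_a^T(I+XY)^{-1}Xe_{ab}$ as $\sigma^2\,e_a^T(I+X_0(\sigma^2 Y))^{-1}X_0e_{ab}$, and since $\sigma^2 Y$ is again Laplacian and $\sigma^2>0$, goodness still yields non-negativity. Combining the non-negative scalar and matrix factors shows the integrand, hence the integral, is non-negative, giving $\theta^*[o(\bD)]_a \ge \theta^*[\bD]_a$ and thus monotonicity in the sense of Definition~\ref{def:monotonicity}.
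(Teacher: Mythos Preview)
Your proposal is correct and follows essentially the same approach as the paper: reduce to single $\append$/$\update$ operations, integrate the derivative from Lemma~\ref{lem:derivatives-datasets-operations}, split each derivative into a non-negative scalar factor and the matrix factor $e_a^T(I+XY)^{-1}Xe_{ab}$, and check that $Y=L+\tilde H$ is Laplacian so that goodness applies. Your explicit bookkeeping of the $\sigma^2$ discrepancy between the scaled Gram matrix in Lemma~\ref{lem:derivatives-datasets-operations} and the unscaled one in Definition~\ref{def:good-embeddings} is a detail the paper leaves implicit, and your derivation that $H=\sum_{(a,b,r)\in\bD}\Phi_f''(\theta_{ab})S_{ab}$ is a non-negative combination of single-edge Laplacians matches the paper's preamble to the proof.
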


Before going to the proof, we provide some intuition.
Note first that the Hessian $H$ of
$\cE(\theta | \bD) = \sum_{(a,b,r) \in \bD} \Phi_f(\theta_{\ab})$
is also a Laplacian matrix.
    Indeed, let $G$ be the weighted graph whose edges
    are the pairs $(ab)$ of alternatives that occur in the dataset $\bD$,
    weighted by
    $G_{ab} = N_{ab} \cdot \Phi''_f(\theta_{\ab})$
    where $N_{ab}$
    is the number of occurrences of the pair $(ab)$ in $\bD$.
    Since $\Phi_f$ is convex, these weights are nonnegative.
    Then, a direct calculation shows that $H$
    is the graph Laplacian of the weighted graph $G$:
    for $a \neq b$,
        $H_{aa} = \sum_{c \neq a} N_{ac} \Phi_f''(\theta_{ac})$ and
        $H_{ab} = - N_{ab} \Phi_f''(\theta_{\ab})$.
Therefore, given any prior Laplacian matrix $L$, the matrix $L + \tilde{H}$, where $\tilde{H}$ is defined in section \ref{sec:differential-analysis}, is the Laplacian of a graph that combines the prior similarities ($L$) with the similarities inferred from the dataset at hand ($\tilde{H}$).
This observation motivates Definition \ref{def:good-embeddings}.

\begin{proof}[Proof of Theorem \ref{th:monotonicity-with-good-embeddings}]
    By Lemma~\ref{lem:derivatives-datasets-operations}, 
    if, for every operation $o$, 
    every score function $\theta$ 
    and every dataset $\bD$,
    the inequality $e_a^T (I + X(L+\tilde{H}))^{-1}X e_{\ab{}} \ge 0$ holds,
    then the score $\theta^*(\bD)$ derived from 
    the loss function of Equation~\eqref{eq:gbt-datasets-loss-function}
    is monotone.
    This is precisely implied by $x$ being good.
\end{proof}

This result motivates a more precise understanding of good embeddings. 
In the special cases $(A, D) = (2, D)$ and $(A, D) = (A, 1)$, 
we have complete characterizations (see Appendix~\ref{app:A2D1}). 
In general, however, checking goodness is not straightforward: two embeddings $x$ and $y$ can be individually good, while their concatenation $\begin{bmatrix} x & y \end{bmatrix}^\top$ fails to be good (see Propositions \ref{prop:counterexamplea} and \ref{prop:counterexampleb}, Appendix~\ref{app:counterexamples}). 
Nonetheless, any embedding can be made $Y$-good by concatenating it with a sufficiently scaled identity. We formalize this in Appendix~\ref{app:XproofI}.

\subsection{Diffusion embeddings are good}

Finally, we show that any diffusion embedding is a good embedding. 
This uses the fact that any super-Laplacian matrix $\Delta$ satisfies
$e_a^T \Delta^{-1} e_{\ab{}} \ge 0$ for any pair $(a, b)\in\cA^2$.
This result has been proved in \cite[Lemma 1]{DBLP:conf/aaai/FageotFHV24} and we provide an alternative proof highlighting the diffusion perspective \ref{app:prooflemmasuperlaplacian}.

\begin{proposition}
    \label{prop:diffusion-is-good}
    Any diffusion embedding is a good embedding.
\end{proposition}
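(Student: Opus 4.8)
The plan is to reduce the claim to the key fact, stated just above the proposition, that any super-Laplacian matrix $\Delta$ satisfies $e_a^T \Delta^{-1} e_{ab} \ge 0$ for every pair $ab$. Fix an arbitrary Laplacian matrix $Y$ and a pair $(a,b)$; by Definition~\ref{def:good-embeddings} it suffices to show that $e_a^T (I + XY)^{-1} X e_{ab} \ge 0$, where $X = x^T x$. Since $X$ itself need not be invertible (invertibility forces $D \ge A$, the restrictive case flagged in the text), I would not work with $X$ directly, but rather with the regularized Gram matrices $X_\lambda = X + \lambda I$ for $\lambda > 0$, which are invertible and, because $x$ is a diffusion embedding, have super-Laplacian inverses $X_\lambda^{-1}$.

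The core of the argument is an algebraic identity together with a closure property of super-Laplacians. Since $X_\lambda$ is invertible, I factor $I + X_\lambda Y = X_\lambda (X_\lambda^{-1} + Y)$, which yields
\begin{equation}
    (I + X_\lambda Y)^{-1} X_\lambda = (X_\lambda^{-1} + Y)^{-1}.
\end{equation}
Set $M_\lambda = X_\lambda^{-1} + Y$, which is symmetric. I then check that $M_\lambda$ is again super-Laplacian: its off-diagonal entries are sums of two nonpositive terms (the super-Laplacian and the Laplacian off-diagonals), hence nonpositive; and since $Y$ is Laplacian we have $Y_{aa} = -\sum_{b \ne a} Y_{ab}$, so adding $Y$ preserves the strict diagonal-dominance inequality $(X_\lambda^{-1})_{aa} > -\sum_{b \ne a} (X_\lambda^{-1})_{ab}$ inherited from $X_\lambda^{-1}$. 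Applying the key fact to $M_\lambda$ gives $e_a^T (I + X_\lambda Y)^{-1} X_\lambda e_{ab} = e_a^T M_\lambda^{-1} e_{ab} \ge 0$ for every $\lambda > 0$.

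It remains to pass to the limit $\lambda \to 0^+$. Here I would first note that $I + XY$ is invertible: as $X$ is positive semidefinite and the Laplacian $Y$ is positive semidefinite, the product $XY$ has the same spectrum as the positive semidefinite matrix $X^{1/2} Y X^{1/2}$, so its eigenvalues are real and nonnegative, whence every eigenvalue of $I + XY$ is at least $1$. Consequently $\lambda \mapsto (I + X_\lambda Y)^{-1} X_\lambda$ is continuous at $\lambda = 0$ with limit $(I + XY)^{-1} X$, and taking the limit of the nonnegative quantities $e_a^T (I + X_\lambda Y)^{-1} X_\lambda e_{ab}$ yields $e_a^T (I + XY)^{-1} X e_{ab} \ge 0$. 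Since $Y$ and $(a,b)$ were arbitrary, $x$ is good.

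The step I expect to require the most care is this passage to the limit, since it is precisely what allows us to discard the regularization $\lambda I$ that makes the diffusion-embedding hypothesis usable in the first place; the crux there is establishing invertibility of $I + XY$ through the nonnegativity of the spectrum of a product of positive semidefinite matrices. By contrast, the closure of super-Laplacians under the addition of a Laplacian, although essential, is a routine entrywise verification.
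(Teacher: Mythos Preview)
Your proof is correct and follows essentially the same approach as the paper: regularize to $X_\lambda$, use the identity $(I+X_\lambda Y)^{-1}X_\lambda = (X_\lambda^{-1}+Y)^{-1}$, observe that $X_\lambda^{-1}+Y$ is super-Laplacian, apply the key lemma, and let $\lambda\to 0$. You are in fact more careful than the paper in justifying the limit, explicitly verifying invertibility of $I+XY$ via the spectral argument on $X^{1/2}YX^{1/2}$, whereas the paper simply asserts that ``the claim follows by taking the limit.''
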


\begin{proof}
    If $x$ is a diffusion embedding, $\lambda > 0$, and $Y$ is an arbitrary Laplacian matrix, then the matrix $X_\lambda^{-1} + Y$,
    where $X_\lambda = x^Tx + \lambda I$, is super-Laplacian.
    Consequently,
    $e_a^T(I + X_\lambda Y)^{-1}X_\lambda e_b = e_a^T(X_\lambda^{-1} + Y)^{-1}e_b \ge 0$.
    The claim follows by taking the limit $\lambda \to 0$.
\end{proof}

\section{Experimental evaluation}
\label{sec:experiments}

In this Section, we provide a numerical exploration of the prevalence of ``goodness'' for random embeddings, and the statistical error of several preference learning models.\footnote{The code is available at \url{https://github.com/pevab/gbtlab2}, and will be made publicly after the review process. We run experiments on a personal laptop with 16GB of RAM and a 2.10 GHz processor.}
Appendix \ref{sec:exper-real-world} provides complementary experiments on real-world data.

\subsection{Probability of goodness for random embeddings}
\label{sec:probagood}
To illustrate the challenges of achieving good embeddings, we generate random i.i.d. Gaussian embedding matrices x and evaluate their quality. In Figure \ref{fig:probabilities}, we examine a single Gaussian embedding $x$ (left) and its concatenation with the identity matrix $I$ (right). Our findings indicate that the goodness of $x$ is  more likely for large values of $D/A$ and significantly diminishes when $A/D$ is large. The concatenation with the identity matrix notably enhances the goodness, aligning with Proposition~\ref{prop:assymptoticdelta}.

\begin{figure}[t]
\centering
\begin{subfigure}{0.49\textwidth}
    \includegraphics[width=\textwidth]{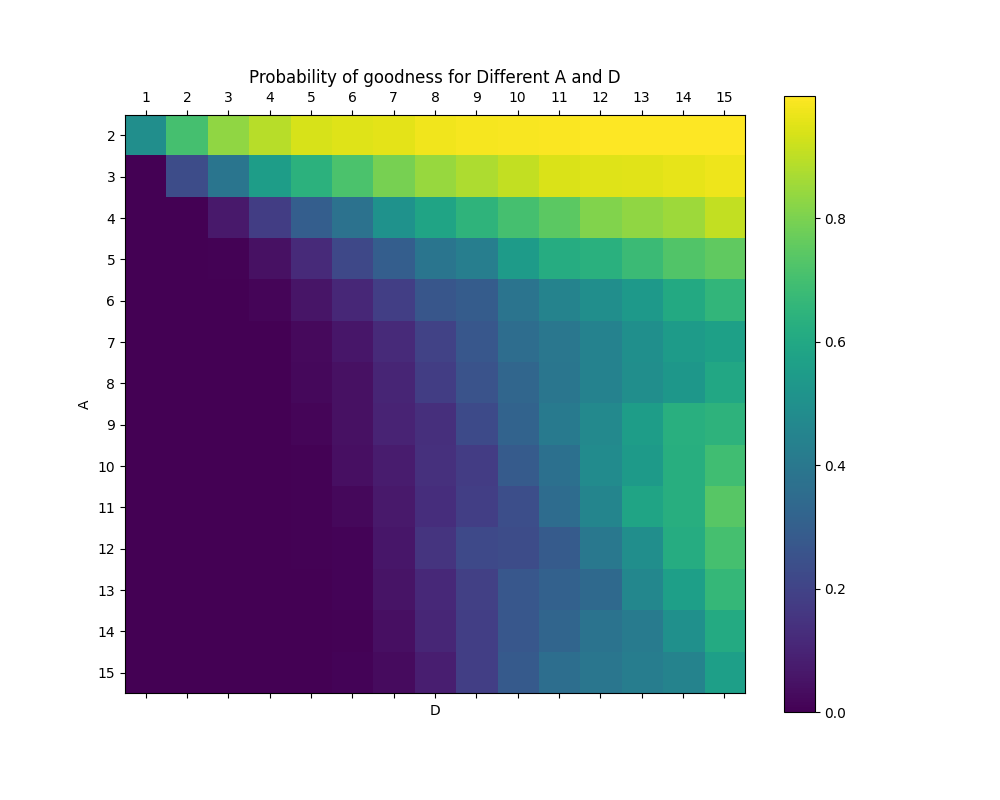}
    \caption{}
    \label{fig:Xgood}
\end{subfigure}
\hfill
\begin{subfigure}{0.49\textwidth}
    \includegraphics[width=\textwidth]{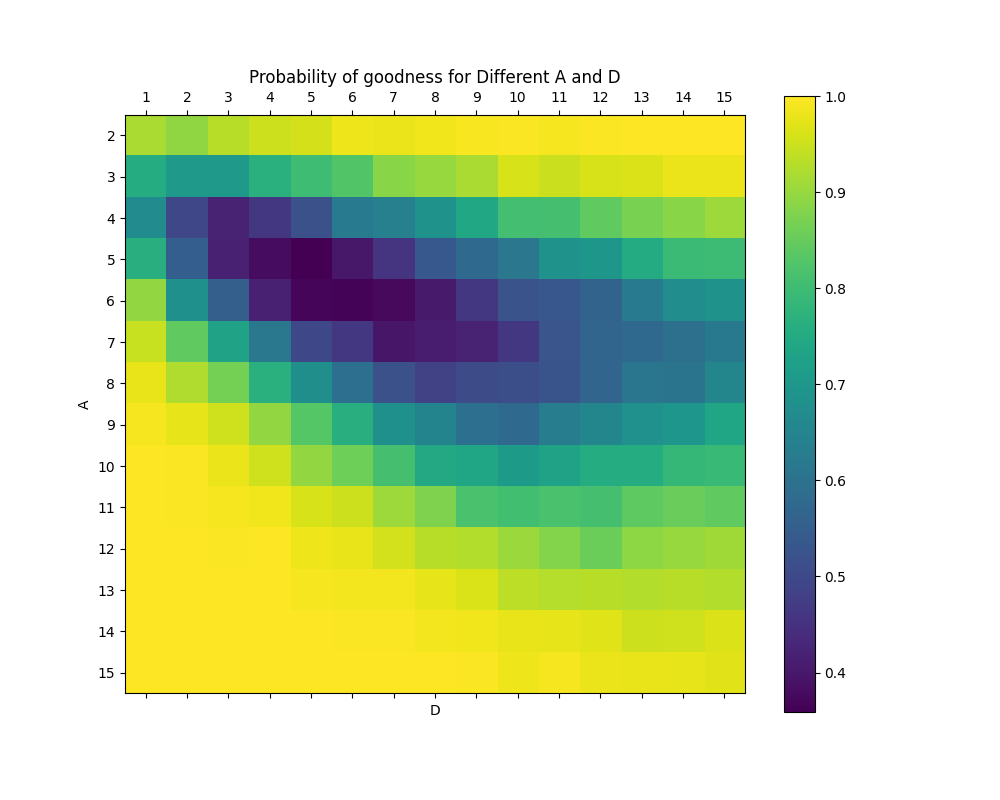}
    \caption{}
    \label{fig:[XI]good}
\end{subfigure}
\caption{
\emph{Left pane:} Probability that a Gaussian i.i.d embedding $x$ is a good embedding for $2 \leq A \leq 15$ and $1 \leq D \leq 15$.
\emph{Right pane:} As for the left pane with embedding $\begin{bmatrix}
    I & x
\end{bmatrix}^T$.
}
\label{fig:probabilities}
\vspace{-2ex}
\end{figure}

\subsection{Generative model, metric, and simulations}

\begin{figure}[t]
\centering
\begin{subfigure}{0.49\textwidth}
    \includegraphics[width=\textwidth]{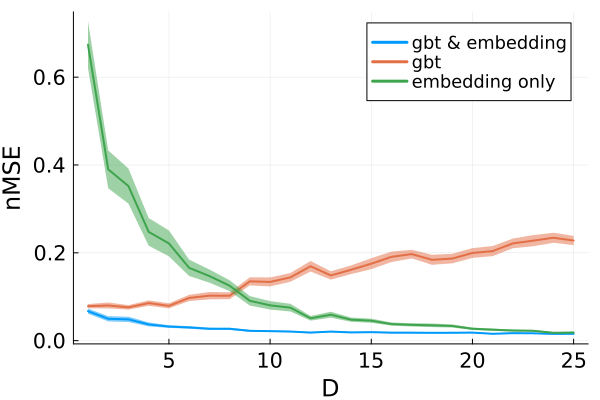}
    \caption{}
    \label{fig:MSE_D}
\end{subfigure}
\hfill
\begin{subfigure}{0.49\textwidth}
    \includegraphics[width=\textwidth]{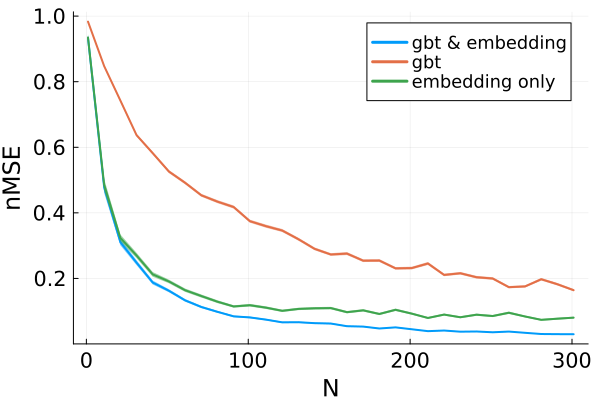}
    \caption{}
    \label{fig:MSE_N}
\end{subfigure}
\caption{
\emph{Left pane:} nMSE as a function of $D$ for $A = 25$ alternatives and $N = 500$ comparisons over $100$ seeds. Blue curve with $\begin{bmatrix} I & x \end{bmatrix}^T$ (full embedding), orange curve with embedding $I$ (classical GBT), and green curve with embedding $x$ (features only).
\emph{Right pane:} nMSE with respect to the number of comparisons $N$ for $A = 20$, $D = 10$, and $1000$ seeds. Blue curve: GBT with one-hot encoding; Orange curve: GBT.
\pbledit{Every curve is displayed with its error bar (using $1.96 \sigma / \sqrt{\text{n\_seeds}}$).}
}
\label{fig:figures}
\vspace{-2ex}
\end{figure}

For each experiment, we consider the ground-truth embedding $ x^\dagger \in \mathbb{R}^{D\times A}$,  Laplacian matrix $ L^\dagger $, constant $\sigma^\dagger$, and  root law $ f^\dagger$. The ground-truth features are generated as
$\beta^\dagger \sim \mathcal{N}(0, (\sigma^\dagger)^2 I + x^\dagger L^\dagger (x^\dagger)^T)$ and $\theta^\dagger = (x^\dagger)^T \beta^\dagger$.
We then create a dataset $ \bD : [N] \rightarrow \cA^2 \times \setComp $ by first selecting $ N $ random comparison pairs uniformly. The corresponding random comparisons $ r $ are generated using the root law $f^\dagger$ and conditionally to $\theta^\dagger$.
We shall only consider the uniform root law $ f^\dagger = \frac{1}{2} 1_{[-1 , 1]} $ and set $\sigma^\dagger = 1$.

The estimated scores are computed as 
$\theta^*(\bD) = \text{GBT}_{f,\sigma,x,L}(\bD)$, 
where the quadruplet $(f, \sigma, x, L)$ may or may not align with the ground truth. 
Since the quality of a score vector is invariant under constant shifts, 
we evaluate the error over zero-mean versions of both $\theta^\dagger$ and $\theta^*(\bD)$.
More precisely, we use Monte Carlo simulations to estimate 
the normalized mean squared error (nMSE), defined as:
$$
\text{nMSE}(f^\dagger, \sigma^\dagger, x^\dagger, L^\dagger ; f, \sigma, x, L; N) = \text{nMSE} = \mathbb{E} \left[\frac{\left\| \left(\theta^*(\bD) - \bar{\theta}^*(\bD) \right) - \left(\theta^\dagger - \bar{\theta}^\dagger \right) \right\|^2}{\|\theta^\dagger - \bar{\theta}^\dagger \|^2} \right].
$$
We then analyze how the nMSE evolves with respect to various parameters.

Figure~\ref{fig:MSE_D} shows the nMSE as a function of $D$, using data generated with $x^\dagger = \begin{bmatrix} I & \tilde{x}^\dagger \end{bmatrix}^T$ (i.i.d. Gaussian $\tilde{x}$), uniform $f^\dagger$, $L^\dagger = 0$, and $\sigma^\dagger = 1$.
We compare three models with shared parameters $(f, L, \sigma) = (f^\dagger, L^\dagger, \sigma^\dagger)$, using $x^\dagger$, $I$ (classical GBT), and $\tilde{x}^\dagger$ respectively. The embedding-based model outperforms others, combining the strengths of classical GBT for small $D$ and feature-based learning for larger $D$.

Figure~\ref{fig:MSE_N} shows the nMSE as a function of the number of comparisons $N$. Data are generated with $(f^\dagger, x^\dagger, L^\dagger, \sigma^\dagger) = \left( \frac{1}{2} 1_{[-1 , 1]}, \begin{bmatrix} I & \tilde{x}^\dagger \end{bmatrix}^T, 0 , 1 \right)$, where $\tilde{x}$ is a one-hot encoding matrix (see Section~\ref{subsec:onehotencoding}). 
The results show that one-hot encoding greatly reduces the number of comparisons needed to reach a given nMSE. This is useful in applications like YouTube score estimation \cite{hoang2024solidagomodularcollaborativescoring}, where the encoding reflects the channel and enables generalization across alternatives.

\section{Conclusion}
\label{sec:conclusion}

In this paper,
we introduced a new comparison-based preference learning model,
namely \emph{linear GBT with diffusion prior}.
This model not only generalizes to previously uncompared data using embeddings, but also potentially guarantees monotonicity, depending on the class of embeddings used.
We proved that our model is monotone for various classes
of embeddings (one-hot encodings, diffusion, and good embeddings).
To the best of our knowledge,
linear GBT with diffusion prior
is the first model that guarantees monotonicity
while being able to generalize.

Diffusion embeddings form a class
of embeddings (containing one-hot encodings) 
that yield monotonicity. 
Our proof techniques relied on an interesting
interplay between an algebraic criterion 
for monotonicity (Definition~\ref{def:good-embeddings})
and properties of (super) Laplacian matrices akin to diffusion theory.

\paragraph*{Limitations.}
While improving the understanding of preference learning with guarantees, our theory currently provides guarantees for diffusion embeddings only.
We hope to motivate more work on preference learning with guarantees, in order to build more trusworthy AI systems, with notable applications in \cite{weyl2025prosocialmedia,hoang2021tournesolquestlargesecure,hoang2024solidagomodularcollaborativescoring,small2021polis}.
Also, we caution against the use of preference learning algorithms that rely on data collected in inhumane conditions, as is mostly the case today \cite{DW2025, perrigo2023exclusive, hao2023cleaning, ghanalawsuit}.
It is unclear whether our work can positively contribute to this issue.

\subsubsection*{Acknowledgements}
The contribution of Gilles Bareilles has been funded by European Union’s Horizon Europe research and innovation programme under grant agreement No. 101070568.

\bibliographystyle{plain}
\bibliography{references.bib}

\newpage
\appendix

\section{Experiments on real world data}
\label{sec:exper-real-world}

In this Section, we complete the experiments on synthetic data (Section \ref{sec:experiments}) with experiments on real-world data \cite{hoang2021tournesolquestlargesecure}.
More precisely, we provide numerical evidence for the fact that including a (diffusion) embeddings improves performance.
The code to reproduce experiments is available at \url{https://github.com/pevab/gbtlab2}.

\subsection{Experimental set-up}
The real-world data contains comparisons between Youtube videos made by various users, from the Tournesol platform \cite{hoang2024solidagomodularcollaborativescoring}.
We selected a subset $\bD$ of $1000$ comparisons from a single user, the one that has most comparisons.
Every comparison is a tuple $(a,b,r)$ where $a,b$ are video identifiers, and $r$ is the comparison value $r$, originally an integer between $-10$ and $10$, which we rescale to fit in $[-1,1]$.

In addition, we associate for each video $a$, the YouTube channel $c$ it belongs to. We describe this relation using a one-hot encoding matrix $\chi \in \mathbb{R}^{D \times N}$: $\chi_{ca} = 1$ if the video $a$ belongs to the channel $c$,
or $\chi_{ca} = 0$ otherwise.
We obtain an embedding $x \in \mathbb{R}^{(D + N)\times N}$ by concatenating $\chi$ with $\lambda I$, that is, $x = \begin{pmatrix} \chi \\ I_N \end{pmatrix}$.

We compare two models: \emph{(i)} $\GBT_{f, 1, x, 0}$ (which uses embeddings), 
and \emph{(ii)} $\GBT_{f, 1, I_N, 0}$ (the original $\GBT$, which does not use embeddings).
Both models have the uniform distribution in $[-1, 1]$ as a root law, $f(r) = \tfrac{1}{2} 1_{[-1, 1]}(r)$, and the same Gaussian prior. We do not use any Laplacian regularization.

After training, each model $M$ computes, given a pair $(a,b)$ of video identifiers, the expected comparison value defined as  
\begin{equation}
    M(a,b) = \int r \cdot f(r) e^{r (\theta^*_a - \theta^*_b)} dr = \Phi'_f(\theta^*_a - \theta^*_b)
\end{equation}
where $\Phi_f$ is the cumulant-generating function of the root law, 
and $\theta^*$ are the scores learned.
Given a validation dataset $\bD_{\text{val}}$, the validation risk of $M$ is given by
\begin{equation}
    \frac{1}{|\bD_{\text{val}}|} \sum_{(a,b,r) \in \bD_{\text{val}}}  (M(a,b) - r)^2
\end{equation}

\subsection{Results}

Figure~\ref{fig:real_world_comparison} reports the empirical risks of the two models, using a $10$-fold cross validation scheme over the dataset $\bD$ (1000 comparisons).
We observe that the average validation risk of the model with embeddings is $8.40 \cdot 10^{-3}$, while that of the model without embeddings is $10.1 \cdot 10^{-3}$.
Hence, including the YouTube channel embeddings reduces the risk by $17\%$ on average.

\begin{figure}
    \centering
    \includegraphics[width=0.66\linewidth]{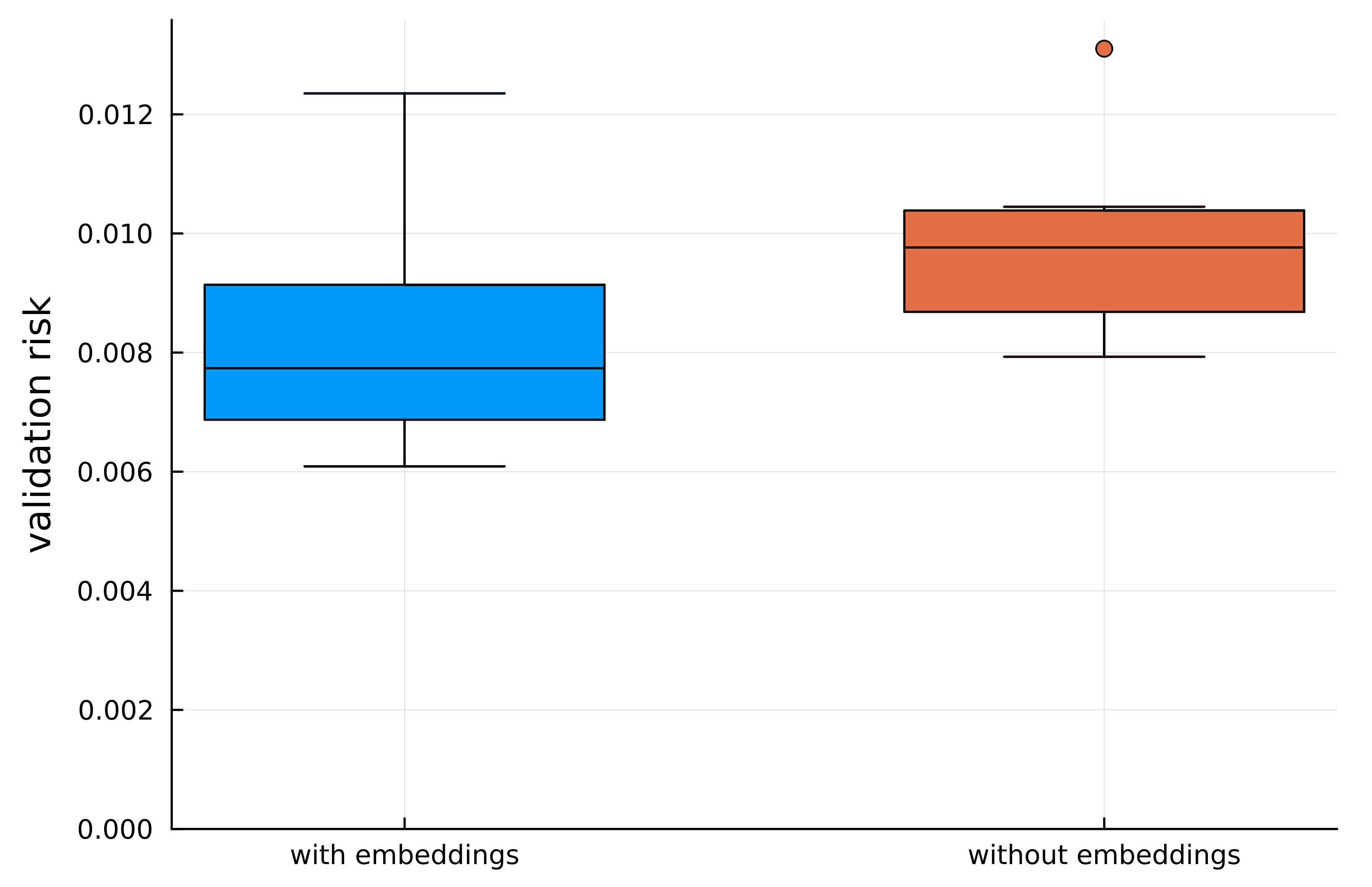}
    \caption{Validation risks of $\GBT_{f,1,x,0}$ (with embeddings, on the left) and $\GBT_{f,1,I_N,0}$ (without embeddings, on the right), estimated using $10$-fold cross validation. The box plots report the minimum, 1st quartile, median, 3rd quartile and maximum. Outliers are also shown.}
    \label{fig:real_world_comparison}
\end{figure}

\section{Proof of neutrality (Proposition~\ref{prop:neutrality})}
\label{app:neutrality}

To formalize neutrality, 
we must define how a permutation $\tau$ of the alternatives 
acts on the inputs and outputs of Linear GBT with Diffusion Prior.
We define the actions as follow:
\begin{align}
    (\tau \cdot \bD)_n &= \tau \cdot \bD_n, \\
    \tau \cdot (a, b, r) &= (\tau(a), \tau(b), r), \\
    (\tau \cdot \theta)_a &= \theta_{\tau(a)}, \\
    (\tau \cdot x)_a &= x_{\tau(a)}, \\
    (\tau \cdot L)_{ab} &= L_{\tau(a) \tau(b)}.
\end{align}
Neutrality is formalized by the equality
\begin{align}
    \forall \tau \mathsep \tau^{-1} 
        \cdot GBT_{f, \sigma, \tau \cdot x, \tau \cdot L} 
        \circ \tau 
    = GBT_{f, \sigma, x, L}.
\end{align}
We indeed have
\begin{align}
    &\left( \tau^{-1} \cdot GBT_{f, \sigma, \tau \cdot x, \tau \cdot L} \circ \tau (\bD) \right)_a
    = \left( GBT_{f, \sigma, \tau \cdot x, \tau \cdot L} \left( \tau \cdot \bD \right) \right)_{\tau^{-1}(a)} \\
    &= (\tau \cdot x)^T_{\tau^{-1}(a)} \beta_{f, \sigma, \tau \cdot x, \tau \cdot L}^*(\tau \cdot \bD)
    = x_a^T \beta_{f, \sigma, \tau \cdot x, \tau \cdot L}^*(\tau \cdot \bD) \\
    &= x^T_a \argmin_{\beta} 
        \frac{1}{2 \sigma^2} \norm{\beta}{2}^2
        + \frac{1}{2} \sum_{ab} (x^T_{\tau(a)} \beta) L_{\tau(a) \tau(b)} (x^T_{\tau(b)} \beta) \nonumber \\
        &\qquad 
        + \sum_{(a, b, r) \in \bD} \left(
            \Phi_f((\tau \cdot x)_{\tau^{-1}(a) \tau^{-1}(b)}^T \beta)
            - r (\tau \cdot x)_{\tau^{-1}(a) \tau^{-1}(b)}^T \beta
        \right) \\
    &= x^T_a \argmin_{\beta} 
        \frac{1}{2 \sigma^2} \norm{\beta}{2}^2
        + \frac{1}{2} \sum_{a'b'} (x^T_{a'} \beta) L_{a' b'} (x^T_{b'} \beta) 
        + \sum_{(a, b, r) \in \bD} \left(
            \Phi_f(x_{\ab}^T \beta)
            - r x_{\ab}^T \beta
        \right) \\
    &= x_a^T \beta_{f, \sigma, x, L}^*(\bD)
    = \left( GBT_{f, \sigma, x, L} (\bD) \right)_a.
\end{align}

\section{Proof of Lemma \ref{lem:derivatives-datasets-operations}}
\label{app:differential}
    We consider the case of $o = \append_{a, b, r}$.
    The case $o = \update_{n, r}$ with $\bD_n\simeq (a, b, s)$ is proved similarly.
    The loss functions $\cL_\lambda(\beta | \bD, o)$
    and $\cL(\beta | \bD)$
    differ by \gbreplace{$\Phi_f(\theta_{\ab}) - r \theta_{\ab}$}{$\lambda (\Phi_f(\theta_{\ab}(\beta)) - r \theta_{\ab}(\beta)$)}.
    The term $\theta_{\ab}$ being linear in $\beta$, its Hessian is zero.
    On the other hand, the Hessian of $\Phi_f(\theta_{\ab})$
    is simply the Laplacian $\Phi_f''(\theta_{\ab}) \cdot S^{ab}$ of the graph with a single edge $ab$ with weight
    $\Phi_f''(\theta_{\ab})$.
    Writing
    \begin{equation}
        H^\lambda = H + \lambda \Phi_f''(\theta_{\ab}) \cdot S^{ab},
    \end{equation}
    we obtain the Hessian of the loss
    \begin{equation}
        H_\beta \cL_\lambda(\beta | \bD, o) = x (L + H^\lambda )x^T + \frac{1}{\sigma^2}I \succeq \frac{1}{\sigma^2}I.
    \end{equation}
    Therefore, the loss $\cL_\lambda(\beta | \bD, o)$ 
    is strictly convex, and admits a global minimizer $\beta^*_\lambda(\bD,o)$.
    To simplify notations in this proof, we write $\beta^*(\lambda)$.

    We want to use the implicit function theorem
    to analyze how $\beta^*(\lambda)$ varies with $\lambda$.
    For that, consider the gradient of the loss 
    $\cL_\lambda(\beta | \bD, o)$
    \begin{equation}
        F(\beta, \lambda) = \nabla_\beta \cL(\beta | o_\lambda(\bD))
        = \nabla_\beta \cL(\beta | \bD) 
        + \lambda \cdot (\Phi'_f(\theta_{\ab}) - r) x e_{\ab}
    \end{equation}
    with domain $\mathbb{R}^D \times \mathbb{R}$
    and codomain $\mathbb{R}^D$.
    
    Fix some $\mu \in \mathbb{R}$.
    The Jacobian of $F(\beta,\lambda)$ with respect to $\beta$,
    evaluated at $\lambda = \mu$,
    is given by
    \begin{equation}
        J_\beta F(\beta, \mu) = \frac{1}{\sigma^2}I + x(L + H^\mu)x^T
    \end{equation}
    which is invertible.
    Moreover, since $\beta^*(\mu)$ is a minimizer, 
    we have $F(\beta^*(\mu), \mu) = 0$.

    Hence, the implicit function theorem
    states that there exists an open neighborhood $U$
    of $\mu$, and a smooth function $\gamma : U \to \mathbb{R}^D$
    such that
    \begin{gather}
        \gamma(\mu) = \beta^*(\mu) \\
        \forall \lambda \in U, F(\gamma(\lambda), \lambda) = 0
    \end{gather}
    The latter equality implies that $\gamma(\lambda) = \beta^*(\lambda)$
    for all $\lambda \in U$.
    Consequently, $\beta^*(\lambda)$ and $\theta^*(\lambda) = x^T \beta^*(\lambda)$ depend smoothly on $\lambda$.
    In addition, the implicit function theorem also 
    gives an expression for the Jacobian $J_\lambda \beta^*$, evaluated at $\mu$
    \begin{align}
        J_\lambda \beta^*(\mu)
        &=
        - (J_\beta F)^{-1} J_\lambda F \\
        &= (r - \Phi'_f(\theta^*_{\ab})) \cdot
        \left(\frac{1}{\sigma^2}I + x(L + H^\mu)x^T\right)^{-1}  x e_{\ab}
    \end{align}
    Finally, note that
    \begin{align}
        \frac{d \theta^*}{d\lambda}(\mu)
        &= J_\beta \theta^*(\beta^*(\mu)) \cdot J_\lambda \beta^*(\mu) \\
        &= (r - \Phi'_f(\theta^*_{\ab}))
        \cdot  x^T \left(\frac{1}{\sigma^2}I + x (L + H^\mu) x^T\right)^{-1} x e_{\ab}
    \end{align}

    Let $M = L + H^\mu$, and $X = \sigma^2 x^Tx$.
    Using Woodbury's identity $(I + UV)^{-1} = I - U(I + VU)^{-1}V$,
    we derive
    \begin{align}
        \left(\frac{1}{\sigma^2}I + x(L + H^\mu)x^T\right)^{-1}
        &= \sigma^2 (I + \sigma^2 xMx^T)^{-1}  \\
        &= \sigma^2I - \sigma^4 xM(I + \sigma^2 x^T x M)^{-1} x^T \\
        x^T\left(\frac{1}{\sigma^2}I + x(L + H)x^T\right)^{-1}x
        &= X - XM(I + XM)^{-1}X   \\
        &= (I - XM(I + XM)^{-1})X \\
        &= (I + XM)^{-1} X 
        \label{eq:woodbury-chaining}
    \end{align}
    Thus,
    \begin{equation}
        \frac{d \theta^*}{d\lambda}(\mu)
        = (r - \Phi'_f(\theta^*_{\ab}))
        \cdot (I + \sigma^2 x^\top x (L + H^\mu)) \sigma^2 x^\top x
        e_{\ab}.
    \end{equation}

    The result follows.

\section{Good Embeddings for $A = 2$ or $D=1$}
\label{app:A2D1}

 We can fully characterize goodness in lowest dimensional regime, either with only two alternatives or with an embedding on a single feature.

\begin{definition}
    We say that a matrix $M$ is max-diagonally dominant if $M_{aa} \geq M_{ab}$ for any $(ab)$.
\end{definition}

\begin{proposition}
\label{prop:strictmonotA=2}
    An embedding with Gram matrix 
    $X = \begin{bmatrix}
        a & c \\
        c & b
    \end{bmatrix}$ 
    is monotonicity proof if and only if 
    $- \sqrt{ab} \leq c \leq \min(a,b)$.
\end{proposition}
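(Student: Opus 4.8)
The decisive simplification is that with only $A=2$ alternatives the cone of Laplacian matrices is one-dimensional. I would begin by recording that every Laplacian $Y$ on two alternatives has the form $Y = t\,S$ with $t\ge 0$, where $S = \left[\begin{smallmatrix}1&-1\\-1&1\end{smallmatrix}\right]$ is the single-edge Laplacian $S_{12}$. Since $X=x^\top x$ is a Gram matrix it is automatically positive semidefinite, which already forces $a,b\ge 0$ and $\lvert c\rvert\le\sqrt{ab}$, i.e.\ $-\sqrt{ab}\le c\le\sqrt{ab}$; this supplies the lower bound for free. It then remains to decide, among such $X$, exactly which are good. By Definition~\ref{def:good-embeddings} and the above, goodness amounts to requiring $e_a^\top(I+tXS)^{-1}X\,e_{ab}\ge 0$ for the two ordered pairs $(ab)\in\{(1,2),(2,1)\}$ and all $t\ge 0$.

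Next I would carry out the $2\times 2$ computation directly, deliberately avoiding the reduction $(I+XY)^{-1}X=(X^{-1}+Y)^{-1}$ so as not to presume $X$ invertible. A short calculation gives $\det(I+tXS)=1+t(a+b-2c)$, which satisfies $\det(I+tXS)\ge 1>0$ on the whole PSD cone because $a+b-2c\ge a+b-2\sqrt{ab}=(\sqrt a-\sqrt b)^2\ge 0$; hence $I+tXS$ is invertible for every admissible $X$ and every $t\ge 0$, singular $X$ included. Writing $M=(I+tXS)^{-1}X$, the crucial point is that the two quantities of interest reduce to $M_{11}-M_{12}=(a-c)/\det$ and $M_{22}-M_{21}=(b-c)/\det$: the entire $t$-dependence cancels in the numerators, and since $\det>0$ their signs are governed purely by $a-c$ and $b-c$.

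From here the conclusion is immediate. The pair $(1,2)$ yields the condition $a-c\ge 0$ and the pair $(2,1)$ yields $b-c\ge 0$, so $x$ is good if and only if $c\le a$ and $c\le b$, that is $c\le\min(a,b)$. For necessity one does not even need the general $t$: evaluating at $t=0$ (the zero Laplacian $Y=0$, for which $M=X$) already forces $a-c\ge 0$ and $b-c\ge 0$. Combining with the standing PSD bound $-\sqrt{ab}\le c$ and noting that $\min(a,b)\le\sqrt{ab}$ — so the PSD upper bound $c\le\sqrt{ab}$ is subsumed by $c\le\min(a,b)$ — the good Gram matrices are exactly those with $-\sqrt{ab}\le c\le\min(a,b)$, as claimed.

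I expect the only genuine subtlety to be the boundary/singular regime $c^2=ab$, where $X^{-1}$ does not exist and the usual passage to $(X^{-1}+Y)^{-1}$ breaks down; the direct computation above sidesteps this entirely, since it never inverts $X$ and since $\det(I+tXS)\ge 1$ keeps $I+tXS$ invertible across the whole cone (this is the analogue of the $\lambda\to 0$ limiting argument used in Proposition~\ref{prop:diffusion-is-good}, but here it is unnecessary). The remaining work — the explicit inverse and the cancellation of $t$ — is routine algebra, so the heart of the proof is simply the observation that the relevant corner differences of $(I+tXS)^{-1}X$ are $t$-free and equal $a-c$ and $b-c$ up to the positive factor $1/\det$.
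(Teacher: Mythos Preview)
Your proof is correct and follows essentially the same route as the paper: reduce to the one-parameter family of Laplacians $Y=tS$, compute $M=(I+tXS)^{-1}X$ explicitly, and read off that max-diagonal dominance is equivalent to $a\ge c$ and $b\ge c$. Your treatment is slightly more careful than the paper's---you keep $t$ general and observe the cancellation in the numerators (whereas the paper sets $\delta=1$ ``without loss of generality''), and you avoid inverting $X$ so that the singular boundary $c^2=ab$ is handled cleanly---but these are refinements of the same direct $2\times 2$ computation, not a different approach.
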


\begin{proof}
We first observe that, since $x^T x$ is positive semidefinite,  $a,b \geq 0$ and $det(x^T x) = ab - c^2 \geq 0$. This implies in particular that $c \geq - \sqrt{ab}$, which is the desired lower bound.

Two-dimensional Laplacian matrices are of the form 
$Y = \delta \left[\begin{matrix}
    1 & -1 \\
    -1 & 1 
\end{matrix}\right]$ 
with $\delta > 0$, which can be chosen as being equal to $1$ without loss of generality. We then have that $I + X Y = \left[\begin{matrix}
    1 + a - c & c - a \\
    c - b & 1 + b - c 
\end{matrix}\right]$. After simplification, its determinant is $det(I + XY) = 1 + a + b - 2c$. This quantity is strictly positive for $ c < \frac{1 + a + b}{2}$, which is always the case for as $c^2 \leq ab$. Hence, the matrix is invertible and we have, after computation,
\begin{equation}
    M = (I +XY)^{-1} X = \frac{1}{1 + a + b - 2c} \left[\begin{matrix}
    a + ab - c^2  & c + ab  - c^2  \\
    c + ab - c^2 & b + ab - c^2.
\end{matrix}\right]
\end{equation}
Then, $M$ is max-diagonally dominant if and only if $a \geq c$ et $b \geq c$, as expected. Finally, this shows that the embedding $x$ is $Y$-good for any $Y$, hence good.
\end{proof}

We now focus on the case $A=2$, for which we obtain a complete characterization of the goodness.

\begin{proposition} \label{prop:strongmonoto2alternatives}
Consider the GBT model with embedding $x = [x_a, x_b] \in \mathbb{R}^{D \times 2}$ and root law $f$. Then, the model is good if and only if, for any $(a,b)\in\cA^2$, $x_a = x_b$ or $x_a^T x_b \leq \min( \| x_a \|^2 , ||x_b \|^2)$. The latter is equivalent to
    \begin{equation}
    \alpha(x_a ,x_b) \leq \alpha_0( \| x_a \|, \|x_b \|) =  \arccos \left(\min\left( \frac{\|x_a\|}{\|x_b\|}, \frac{\|x_b\|}{\|x_a\|} \right) \right) \in [0,\pi/2]
    \end{equation}
    where $\alpha(x_a ,x_b) = \frac{x_a^T x_b}{ \|x_a \| \| x_b \| }$ is the angle between $x_a$ and $x_b$.
\end{proposition}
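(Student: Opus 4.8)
The plan is to reduce the statement to the explicit two-by-two computation already performed in Proposition~\ref{prop:strictmonotA=2}, and then to normalize the resulting algebraic inequality into the geometric form, treating the degenerate configurations separately. First I would fix the notation $a=\|x_a\|^2$, $b=\|x_b\|^2$, $c=x_a^\top x_b$, so that the Gram matrix is $X=x^\top x=\left[\begin{smallmatrix} a & c\\ c & b\end{smallmatrix}\right]$. By Definition~\ref{def:good-embeddings}, $x$ is good exactly when $e_\alpha^\top(I+XY)^{-1}X e_{\alpha\beta}\ge 0$ for every two-by-two Laplacian $Y$ and both ordered pairs $(\alpha\beta)$. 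Since every such $Y$ equals $\delta\,\left[\begin{smallmatrix}1&-1\\-1&1\end{smallmatrix}\right]$ with $\delta\ge 0$, and since Proposition~\ref{prop:strictmonotA=2} shows that the relevant entry of $(I+XY)^{-1}X$ has the $\delta$-independent sign of $a-c$ (respectively $b-c$) over a strictly positive denominator $1+\delta(a+b-2c)$, goodness is equivalent to $a\ge c$ and $b\ge c$, i.e. $-\sqrt{ab}\le c\le\min(a,b)$.

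Next I would discard the lower bound as vacuous: Cauchy--Schwarz gives $|c|=|x_a^\top x_b|\le\|x_a\|\,\|x_b\|=\sqrt{ab}$, so $c\ge-\sqrt{ab}$ holds automatically (this is just positive semidefiniteness of $X$). Hence goodness is equivalent to the single inequality $x_a^\top x_b\le\min(\|x_a\|^2,\|x_b\|^2)$. For nonzero $x_a,x_b$, dividing by $\|x_a\|\,\|x_b\|>0$ turns this into $\tfrac{x_a^\top x_b}{\|x_a\|\,\|x_b\|}\le\min\!\big(\tfrac{\|x_a\|}{\|x_b\|},\tfrac{\|x_b\|}{\|x_a\|}\big)$; applying the decreasing map $\arccos$ to both sides yields precisely the stated angle condition with threshold $\theta_0=\arccos\!\big(\min(\ldots)\big)\in[0,\pi/2]$. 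Because this normalization is a bijection on nonzero vectors, it delivers both implications of the ``if and only if'' simultaneously.

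Finally I would treat the boundary and degenerate configurations, which is where the disjunctive clause ``$x_a=x_b$ or $\ldots$'' originates: when $x_a=x_b$ one has $a=b=c$, so $c\le\min(a,b)$ holds with equality and the model is good, yet this is exactly the case $\theta_0=0$ at which the angle inequality degenerates to the trivial $\angle(x_a,x_b)\ge 0$; recording it explicitly keeps the characterization honest and also covers coincident (possibly null) embeddings for which the normalization above is unavailable. I expect this degenerate bookkeeping---rather than any further computation---to be the only delicate point, since all the substantive linear algebra (the inversion of $I+XY$ and the $\delta$-independence of the sign) is already supplied by Proposition~\ref{prop:strictmonotA=2}.
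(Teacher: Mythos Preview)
Your proposal is correct and follows essentially the same route as the paper: reduce to the $2\times 2$ Gram matrix, invoke Proposition~\ref{prop:strictmonotA=2} with $a=\|x_a\|^2$, $b=\|x_b\|^2$, $c=x_a^\top x_b$, discard the lower bound via positive semidefiniteness, and then rewrite the remaining inequality $c\le\min(a,b)$ in angular form. If anything, your write-up is more explicit than the paper's---you spell out the $\delta$-independence of the sign and the role of the degenerate case $x_a=x_b$, whereas the paper simply asserts that ``the angular characterization follows easily'' (and adds a short $D=1$ digression that you rightly omit as unnecessary for the stated claim).
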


 \begin{proof}
    We apply Proposition \ref{prop:strictmonotA=2} to $a = x_a^2$, $b = x_b^2$, and $c = x_a x_b$. Then, the goodness is equivalent to $x_a x_b \leq \min(x_a^2, x_b^2)$. This relation is true for $x_a = x_b$ and $x_a x_b <0$. Otherwise, we have $0 < x_a , x_b$. The relations $x_a x_b \leq x_a^2$ and $x_a x_b \leq x_b^2$ implies that $x_b \leq x_a$ and $x_a \leq x_b$ respectively, leading to a contradiction. Finally, the goodness is equivalent to $x_a = x_b$ or $x_a$ and $x_b$ have different signs. 

     For the $D$ dimensional case, the same observation holds with $a = \| x_a \|^2$, $b = \| x_b \|^2$, and $c = x_a^T x_b$. Then, Proposition \ref{prop:strictmonotA=2} implies that the goodness is equ bivalent to $x_a^T x_b \leq \min(\| x_a \|^2, \| x_b \|^2)$. The angular characterization follows easily. 
 \end{proof}

 We shall see that the goodness is very restricted for $D=1$.

     \begin{proposition} \label{prop:partialthetaD1}
     We consider the GBT model with embedding $x = [x_a]_{a\in \mathcal{A}} \in \mathbb{R}^{1 \times A}$, root law $f$, and Laplacian matrix $L = 0$.
    The model is good if and only if
     \begin{equation} \label{eq:xstronglymonotonousD1}
        x =  [u, \ldots ,u, -v, \ldots, -v, 0, \ldots , 0]^T P = [u 1_{A_1}, - v 1_{A_2}, 0_{A_3}]^T P
     \end{equation}
     for some $u,v > 0$ and $P$ a permutation matrix.
    \end{proposition}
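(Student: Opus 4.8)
The plan is to exploit that for $D=1$ the Gram matrix $X = x^\top x$ is rank one, which makes the goodness quantity $e_a^\top (I + XY)^{-1} X\, e_{ab}$ from Definition~\ref{def:good-embeddings} explicitly computable for \emph{every} Laplacian $Y$ via the Sherman--Morrison formula. I expect this to collapse the problem to a sign condition on the entries of $x$ that is independent of $Y$, after which the claimed three-value structure drops out of a short combinatorial argument. Since $L=0$, goodness of $x$ is exactly what governs monotonicity through Theorem~\ref{th:monotonicity-with-good-embeddings}, so it suffices to characterize good one-dimensional embeddings. Concretely, write $v \eqdef x^\top \in \mathbb{R}^A$, so that $X = v v^\top$ with $X_{ab} = x_a x_b$. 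For an arbitrary Laplacian $Y$ the matrix $XY = v\,(Yv)^\top$ is rank one, and Sherman--Morrison gives
\begin{equation}
    (I + XY)^{-1} X = \left(I - \frac{v (Yv)^\top}{1 + v^\top Y v}\right) v v^\top = \frac{1}{1 + v^\top Y v}\, X,
\end{equation}
where the scalar $1 + v^\top Y v$ is strictly positive because a graph Laplacian $Y$ is positive semidefinite, so $v^\top Y v \ge 0$.

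\textbf{Reduction to a sign condition.} Substituting $e_{ab} = e_a - e_b$ into the identity above yields
\begin{equation}
    e_a^\top (I + XY)^{-1} X\, e_{ab} = \frac{X_{aa} - X_{ab}}{1 + v^\top Y v} = \frac{x_a (x_a - x_b)}{1 + v^\top Y v}.
\end{equation}
As the denominator is positive for every Laplacian $Y$, this expression is nonnegative for all $Y$ if and only if $x_a(x_a - x_b) \ge 0$. Quantifying over all ordered pairs, I conclude that $x$ is good if and only if $x_a(x_a - x_b) \ge 0$ for every pair $(a,b)$; remarkably, the dependence on $Y$ has disappeared entirely.

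\textbf{Solving the sign condition.} Fix an unordered pair $\{a,b\}$ with $x_a \ne x_b$, say $x_a > x_b$. Applying the condition to $(a,b)$ forces $x_a \ge 0$, and to $(b,a)$ forces $x_b \le 0$; hence the larger of any two distinct values is $\ge 0$ and the smaller is $\le 0$. This immediately rules out two distinct strictly positive values and two distinct strictly negative values, so among the entries of $x$ there is at most one strictly positive value $u$, at most one strictly negative value $-v$, and possibly $0$ — at most three distinct values in all. This is exactly the announced form $[u\,1_{A_1}, -v\,1_{A_2}, 0_{A_3}]^\top P$ up to the permutation $P$, with possibly empty blocks. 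The converse is immediate: for any $x$ of this form, a positive entry is maximal (so $x_a>0$ and $x_a-x_b\ge 0$), a negative entry is minimal (so $x_a<0$ and $x_a-x_b\le 0$), and a zero entry gives the product $0$, so $x_a(x_a-x_b)\ge 0$ holds in every case.

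\textbf{Main obstacle.} The analytic core — the Sherman--Morrison collapse that removes all $Y$-dependence — is short once the rank-one structure is noticed, and is really the decisive observation. The only place demanding care is the combinatorial step, where one must use \emph{both} ordered conditions per pair to exclude a second same-sign value and to check the degenerate cases in which one or two of the blocks $A_1, A_2, A_3$ are empty (e.g.\ all entries equal, or only positive-and-zero). Making this counting airtight is the main, though entirely elementary, obstacle.
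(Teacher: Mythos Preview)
Your proof is correct and follows essentially the same route as the paper: reduce goodness to the sign condition $x_a(x_a - x_b) \ge 0$ for all ordered pairs, then argue combinatorially that this forces at most one positive, one negative, and possibly a zero value. The one genuine addition is your Sherman--Morrison computation showing $(I+XY)^{-1}X = X/(1+v^\top Y v)$ for arbitrary $A\times A$ Laplacians $Y$, which rigorously justifies the reduction the paper simply asserts; this makes your argument more self-contained, and your explicit check of the converse is also a detail the paper omits.
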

    
\begin{proof}
     The goodness is equivalent to the fact that, for any $(ab)$, $(x_a - x_b)x_a \geq 0$ and $(x_b - x_a) x_b \geq 0$. This is equivalent to $x_a x_b \leq min (x_a^2, x_b^2)$, i.e. $x_a = x_b$ or $x_a x_b \leq 0$. This means that any $x_a > 0$ should have a common value $u>0$ and any $x_b < 0$ should have a common value $- v < 0$. Hence, $x_a$ can take only the values $u, -v$ and $0$. Permuting the values, we obtain \eqref{eq:xstronglymonotonousD1}.
 \end{proof}

\section{Counterexamples for Monotonicity}
\label{app:counterexamples}

\begin{proposition}
\label{prop:counterexamplea}
    There exists good embeddings $x_1$ and $x_2$ such that $x=\begin{bmatrix}
        x_1 &
        x_2
    \end{bmatrix}^T$ is not good.
\end{proposition}

\begin{proof}
     For $A=3$ and a Gaussian root law ($Y = 3 I - J$), we consider $x_1$ and $x_2$ such that
$$
X_1 = \begin{bmatrix}
0 & 0 & 0 \\
0 & 1 & 1 \\
0 & 1 & 1
\end{bmatrix} \quad
 \text{and} \quad
X_2 = \begin{bmatrix}
1 & 1 & 0 \\
1 & 1 & 0 \\
0 & 0 & 0
\end{bmatrix}$$
Then, $X = x^T x = x_1^T x_1 + x_2^T x_2 = X_1 + X_2 = \begin{bmatrix}
1 & 1 & 0 \\
1 & 2 & 1 \\
0 & 1 & 1
\end{bmatrix}$.
Then, $x_1$ and $x_2$ are good embedding (e.g. remarking that they are $J$-blocs matrices and using Theorem \ref{theo:onehotencoding}). The matrix $M = (I + XY)^{-1} X$ is given by
$$M =\frac{1}{8} \begin{bmatrix}
            {3} & {4} & 1 \\
            {4} & 8 & 4 \\
            1 & 4 & 3 
\end{bmatrix}$$
and verifies $M_{12} > M_{11}$. This contradicts Definition \ref{def:good-embeddings} and $x$ is not $Y$-proof for $Y = 3 I - J$, therefore not good. 
\end{proof}

\begin{proposition}
\label{prop:counterexampleb}
    There exists one-hot encodings $x_1$ and $x_2$ such that $x=\begin{bmatrix}
        x_1 &
        x_2
    \end{bmatrix}^T$ is not a good embedding.
\end{proposition}

\begin{proof}
For $A=5$, let $x_1$ and $x_2$ be one-hot encoding with Gram matrices
    \[
X_1 =
\begin{bmatrix}
1 & 1 & 1 & 0 & 0 \\
1 & 1 & 1 & 0 & 0 \\
1 & 1 & 1 & 0 & 0 \\
0 & 0 & 0 & 1 & 0 \\
0 & 0 & 0 & 0 & 1
\end{bmatrix} \quad \text{and} \quad
X_2 =
\begin{bmatrix}
1 & 0 & 0 & 0 & 0 \\
0 & 1 & 0 & 0 & 0 \\
0 & 0 & 1 & 1 & 1 \\
0 & 0 & 1 & 1 & 1 \\
0 & 0 & 1 & 1 & 1
\end{bmatrix}.
\]
Then, $x_1$ and $x_2$ are good embeddings according to  Theorem \ref{theo:onehotencoding}. The Gram matrix of the concatenated embedding $x = \begin{bmatrix}
    x_1 \\ x_2
\end{bmatrix}$ is
$$
X = X_1 + X_2 =
\begin{bmatrix}
2 & 1 & 1 & 0 & 0 \\
1 & 2 & 1 & 0 & 0 \\
1 & 1 & 2 & 1 & 1 \\
0 & 0 & 1 & 2 & 1 \\
0 & 0 & 1 & 1 & 2
\end{bmatrix} 
$$
is not a $Y$-good embedding for the Laplacian matrix
$$Y = \begin{pmatrix} 2 & -1 & 0 & 0 & -1 \\ -1 & 4 & -1 & -1 & -1 \\ 0 & -1 & 1 & 0 & 0 \\ 0 & -1 & 0 & 1 & 0 \\ -1 & -1 & 0 & 0 & 2 \end{pmatrix}.$$
We indeed have that
$$M = (I + XY)^{-1} X \approx \begin{pmatrix} 0.96 & 0.70 & 0.85 & 0.52 & 0.67 \\ 0.70 & 0.90 & 0.95 & 0.67 & 0.68 \\ 0.85 & 0.95 & 1.48 & 0.83 & 0.84 \\ 0.52 & 0.67 & 0.83 & 1.08 & 0.56 \\ 0.67 & 0.68 & 0.84 & 0.56 & 0.93 \end{pmatrix}$$
is such that $M_{23} > M_{22}$.
\end{proof}

\section{Results of Section \ref{sec:monotproofemb}}
\label{app:XproofI}

We now show that any embedding can be made $Y$-good by appending a sufficiently dominant identity component. This result guarantees that, asymptotically, adding uncorrelated features improves embedding monotonicity, regardless of the original embedding structure. This can be made in relation with Figure \ref{fig:probabilities} for which we compare i.i.d. Gaussian $x$ with it's concatenation with $I$.
\begin{proposition} \label{prop:assymptoticdelta}
    For any embedding $x$ and any Laplacian matrix $Y$, the embedding $x_\lambda = \begin{bmatrix}
        I &
        x / \lambda
    \end{bmatrix}^T$ is $Y$-good for any 
    $\lambda > 3 \sqrt{A} \| x^T x \| / \mathrm{DiagDom}(\sigma^2Y)$
    where $\mathrm{DiagDom}(Y) = \min_{(ab)} (I + Y)^{-1}_{aa} -  (I + Y)^{-1}_{ab} > 0$.
\end{proposition}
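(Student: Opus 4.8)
The plan is to recognize $Y$-goodness of $x_\lambda$ as a small perturbation of the $\lambda\to\infty$ limit, and to control that perturbation by a bound that is crucially \emph{independent of} $\|Y\|$. Writing the (scaled) Gram matrix of $x_\lambda$ as in Lemma~\ref{lem:derivatives-datasets-operations}, namely $X_\lambda = \sigma^2 x_\lambda^\top x_\lambda = \sigma^2 I + \tfrac{\sigma^2}{\lambda^2}\,x^\top x$ (this is what makes $\sigma^2$ enter the threshold), $Y$-goodness in the sense of Definition~\ref{def:good-embeddings} amounts to showing that $M_\lambda := (I + X_\lambda Y)^{-1} X_\lambda$ is max-diagonally dominant, i.e. $e_a^\top M_\lambda\, e_{ab} = (M_\lambda)_{aa}-(M_\lambda)_{ab}\ge 0$ for every pair $(ab)$. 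Since $X_\lambda \succeq \sigma^2 I \succ 0$ is invertible, I would first rewrite $M_\lambda = (X_\lambda^{-1}+Y)^{-1}$ using the same identity as in the proof of Proposition~\ref{prop:diffusion-is-good}.

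The first key step is to identify the limit and its margin. As $\lambda\to\infty$ we have $X_\lambda^{-1}\to \tfrac1{\sigma^2}I$, hence $M_\lambda\to M_0 := (\tfrac1{\sigma^2}I + Y)^{-1} = \sigma^2 (I+\sigma^2 Y)^{-1}$. The matrix $\tfrac1{\sigma^2}I+Y$ is a \emph{super-Laplacian} (off-diagonals inherited from $Y$ are $\le 0$, and the $\tfrac1{\sigma^2}$ makes the diagonal strictly dominate), so by the super-Laplacian lemma (\cite[Lemma~1]{DBLP:conf/aaai/FageotFHV24}, Appendix~\ref{app:prooflemmasuperlaplacian}) $M_0$ is max-diagonally dominant. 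Quantitatively, $e_a^\top M_0\, e_{ab} = \sigma^2\big[(I+\sigma^2 Y)^{-1}_{aa}-(I+\sigma^2 Y)^{-1}_{ab}\big]\ge \sigma^2\,\mathrm{DiagDom}(\sigma^2 Y)$, and the strict M-matrix structure of $I+\sigma^2 Y$ guarantees $\mathrm{DiagDom}(\sigma^2 Y)>0$. This strictly positive margin is what the perturbation must not consume.

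The second key step is a resolvent-perturbation bound. From $M_\lambda - M_0 = M_\lambda\big(\tfrac1{\sigma^2}I-X_\lambda^{-1}\big)M_0$ I would bound each factor in operator norm: $\|X_\lambda^{-1}-\tfrac1{\sigma^2}I\|\le \tfrac{\|x^\top x\|}{\sigma^2\lambda^2}$ (since $\|(I+\tfrac{x^\top x}{\lambda^2})^{-1}-I\|\le \tfrac{\|x^\top x\|}{\lambda^2}$), $\|M_0\|\le \sigma^2$, and $\|M_\lambda\|\le \sigma^2\big(1+\tfrac{\|x^\top x\|}{\lambda^2}\big)$ (both from $X_\lambda^{-1}+Y\succeq X_\lambda^{-1}$). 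The decisive point is that this estimate does \emph{not} involve $\|Y\|$; this is exactly what the rewriting $M_\lambda=(X_\lambda^{-1}+Y)^{-1}$ buys, and it is what makes a $Y$-uniform threshold possible. Passing from the operator-norm estimate to the scalar gap $|e_a^\top(M_\lambda-M_0)e_{ab}|$ and requiring it to stay below $\sigma^2\mathrm{DiagDom}(\sigma^2 Y)$ then forces $e_a^\top M_\lambda e_{ab}>0$ for all $(ab)$, i.e. $x_\lambda$ is $Y$-good, as soon as $\lambda$ is large enough; the common $\sigma^2$ cancels, leaving a threshold that grows with $\|x^\top x\|$ and shrinks with $\mathrm{DiagDom}(\sigma^2 Y)$, of the form stated (the dimensional factor $\sqrt{A}$ arising from the coarse entrywise conversion of the operator-norm bound).

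I expect the main obstacle to be quantitative rather than conceptual. The two substantive points are: (i) securing the \emph{strict} inequality $\mathrm{DiagDom}(\sigma^2 Y)>0$, which the cited super-Laplacian lemma delivers only as a weak inequality and which here must be upgraded using that $I+\sigma^2 Y$ is a nonsingular, strictly diagonally dominant M-matrix (equivalently, the diffusion/maximum-principle argument of Appendix~\ref{app:prooflemmasuperlaplacian}); and (ii) carrying the constants through the norm conversion so that the clean, $\|Y\|$-free perturbation estimate yields the explicit bound claimed. A slightly tighter route bounds the gap directly by $\sqrt2\,\|M_\lambda-M_0\|$, already giving $Y$-goodness once $\lambda$ exceeds a multiple of $\sqrt{\|x^\top x\|/\mathrm{DiagDom}(\sigma^2 Y)}$; the stated threshold is a more conservative sufficient condition obtained from the same balance between margin and perturbation.
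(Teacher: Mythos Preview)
Your argument is sound and shares the decisive idea with the paper: rewrite $(I+X_\lambda Y)^{-1}X_\lambda=(X_\lambda^{-1}+Y)^{-1}$ so that the perturbation bound is free of $\|Y\|$, identify the limit $M_0=\sigma^2(I+\sigma^2Y)^{-1}$, and use its strict max-diagonal dominance as the margin $\mathrm{DiagDom}(\sigma^2Y)>0$. Where you diverge is in the perturbation analysis. The paper does not use the resolvent identity; it performs two nested Neumann expansions---first $(I+x^\top x/\lambda)^{-1}=I-x^\top x/\lambda+Z$, then $((I+Y)+\text{small})^{-1}=(I+Y)^{-1}(I-\cdots+W)$---tracking remainders $Z,W$ and working in the \emph{Frobenius} norm throughout. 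The factor $\sqrt A$ in the stated threshold enters precisely via $\|(I+Y)^{-1}\|_F\le\sqrt A$, not (as you suggest) through an entrywise conversion of an operator-norm bound. Your route via $M_\lambda-M_0=M_\lambda(\tfrac1{\sigma^2}I-X_\lambda^{-1})M_0$ and the spectral bounds $\|M_0\|,\|M_\lambda\|\le\sigma^2(1+\|x^\top x\|/\lambda^2)$ is shorter and in fact yields a tighter estimate, with no $\sqrt A$ and a condition on $\lambda^2$ rather than $\lambda$. One caveat: your closing claim that the paper's bound is simply ``more conservative'' than yours is not globally true, since the two thresholds scale differently (yours behaves like $\sqrt{\|x^\top x\|/\mathrm{DiagDom}}$, the paper's like $\|x^\top x\|/\mathrm{DiagDom}$); your proof establishes $Y$-goodness for an explicit and generally sharper range of $\lambda$, but it does not literally recover the stated constant $3\sqrt A\,\|x^\top x\|/\mathrm{DiagDom}(\sigma^2Y)$ as a sufficient threshold in every regime.
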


\begin{proof}[Proof of Proposition \ref{prop:assymptoticdelta}]
We set $M(X,Y) = (I + XY)^{-1}X$.
The Frobenius norm is such that $\| X + Y \| \leq \| X \| + \| Y\|$ and $\| X  Y \| \leq \| X \| \| Y\|$.
Assuming that $\|X\| < 1$ and for $Z = (I + X)^{-1} - (I - X)$, we have 
\begin{equation} \label{eq:normZ}
    \| Z \| \leq \| X \|^
2 \sum_{k\geq 0} \| X \|^k = \frac{\| X \|^2}{1 - \| X \|}.
\end{equation} 
The matrix $I + X / \lambda$ is positive definite, hence invertible and we have
\begin{align}
    M(I + X /\lambda,Y) = (I + (I+X/\lambda)Y)^{-1} (I+X/\lambda) = ( (I+X/\lambda)^{-1} + Y)^{-1}.
\end{align}
Then, there exist matrices $Z$ and $W$, that we will control later on, such that
\begin{align} \label{eq:MXdelta}
    M(I + X /\lambda,Y) &=  ( I - X/\lambda+ Z + Y)^{-1} \\
    &= (I + Y)^{-1} (I + (Z - X / \lambda) (I +Y)^{-1})^{-1} \\
    &= (I + Y)^{-1} (I + (Z - X / \lambda) (I +Y)^{-1} + W).
\end{align}
From now, we assume that $\lambda > 3\sqrt{A}\|X\|$. In particular, using that $h : x \mapsto \frac{x}{1 - x}$ is increasing over $(0,1)$ we have that $ \frac{  \| X \| / \lambda}{1 -   \| X \| / \lambda } \leq h(\sqrt{A}/3) \leq h(1/3) = \frac12$.
According to \eqref{eq:normZ} applied to $X/\lambda$, we therefore have 
\begin{equation} %
    \| Z \| \leq   \frac{\| X / \lambda \|^2}{1 - \| X / \lambda \|} \leq \frac{\| X\|}{2 \lambda}.
\end{equation} 
We also have that
\begin{equation} %
    \| X / \lambda - Z \| \leq \frac{\| X\|}{ \lambda} + \frac{\| X\|}{2 \lambda} = \frac{3 \| X\|}{2 \lambda}.
\end{equation} 
Note that $\frac{3 \| X\|}{2 \lambda} \leq \frac12$ since $\lambda > 3 \sqrt{A} \|X\|$ and we can apply \eqref{eq:normZ} to evaluate
\begin{align} \label{eq:normW}
    \| W  \| &\leq \frac{\| (X/\lambda - Z) (I + Y)^{-1}\|^2}{1 - \| (X/\lambda - Z)  (I + Y)^{-1}\| } 
    \leq \frac{ A \|  (X/\lambda - Z)\|^2}{1 - \sqrt{A}\| X/\lambda - Z\|} = 
    \sqrt{A} \| X/\lambda - Z \| h(\sqrt{A}\| X/\lambda - Z\|) \\
    &\leq \sqrt{A} \frac{3 \| X\|}{2 \lambda} h\left( \frac{3 \sqrt{A}\| X\|}{2 \lambda}\right) \leq
     \frac{3\sqrt{A}  \| X\|}{2 \lambda} h\left( 1/2 \right)  = \frac{3\sqrt{A}  \| X\|}{2 \lambda}
\end{align} 
where we used $\| (X/\lambda - Z) (I + Y)^{-1}\| \leq \| X/\lambda - Z \| \|(I + Y)^{-1}\| \leq \sqrt{A} \| X/\lambda - Z \|$ (since $(I + Y)^{-1}$ has eigenvalues smaller than 1) and $h(1/2) = 1$. \\

Starting again with \eqref{eq:MXdelta}, we then have 
\begin{align}
    \| M(I + X /\lambda,Y) - (I + Y)^{-1} \|_\infty
    &\leq     \| M(I + X /\lambda,Y) - (I + Y)^{-1} \| \\
   & = \| (Z - X / \lambda) (I +Y)^{-1} + W \|
    \leq \frac{3 \sqrt{A} \| X\|}{2 \lambda} + \frac{3\sqrt{A}  \| X\|}{2 \lambda} \\
    &= \frac{3\sqrt{A}  \| X\|}{\lambda}.
\end{align}
Let $\mathrm{DiagDom} (Y) = \min_{(ab)} \left((I + Y)^{-1}_{aa} - (I + Y)^{-1}_{ab}\right)$ which is strictly positive since $(I + Y)^{-1}$ is strictly max-diagonally dominant 
 \cite{DBLP:conf/aaai/FageotFHV24}. We therefore have that, for any $\lambda > 3\sqrt{A}\|X\| /\mathrm{DiagDom} (Y)$, $$\| M(I + X /\lambda,Y) - (I + Y)^{-1} \|_\infty \leq \mathrm{DiagDom} (Y) $$ and therefore $M(I + X /\lambda,Y)$ is max-diagonally dominant, as expected.
\end{proof}

\section{Inverse of super-Laplacian}
\label{app:prooflemmasuperlaplacian}   

\begin{proposition}
Let $\Delta$ be a super-Laplacian matrix, then for any nodes $a \neq b$ in $\cA$, we have
\[
e_a^T \Delta^{-1} e_{\ab} = (\Delta^{-1})_{aa} - (\Delta^{-1})_{ab} \geq 0.
\]
\end{proposition}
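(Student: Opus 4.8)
The plan is to exhibit $\Delta^{-1}$ as the Green's function of a killed random walk, which makes the diffusion picture explicit. First I would write $\Delta = D - W$, where $D = \Diag(\Delta_{aa})$ collects the diagonal and $W_{ab} = -\Delta_{ab} \ge 0$ (with $W_{aa}=0$) collects the off-diagonal part. The super-Laplacian conditions say exactly that $D_{aa} > 0$ and that $D_{aa} > \sum_{b} W_{ab}$, i.e. $\Delta$ is strictly diagonally dominant with nonpositive off-diagonals. Setting $P = D^{-1}W \ge 0$, strict dominance gives $\|P\|_\infty = \max_a \sum_b W_{ab}/D_{aa} < 1$, so the Neumann series converges and $\Delta^{-1} = (I-P)^{-1}D^{-1} = G\,D^{-1}$ with $G = \sum_{k\ge 0}P^k \ge 0$ entrywise.

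Next I would reduce the desired inequality to a statement about $G$ alone. Since $\Delta$ is symmetric, so is $\Delta^{-1}$, whence $G_{ab}/D_{bb} = (\Delta^{-1})_{ab} = (\Delta^{-1})_{ba} = G_{ba}/D_{aa}$. Therefore $e_a^T\Delta^{-1}e_{ab} = (\Delta^{-1})_{aa}-(\Delta^{-1})_{ab} = (G_{aa}-G_{ba})/D_{aa}$, and since $D_{aa}>0$ it suffices to prove $G_{aa} \ge G_{ba}$.

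The diffusion perspective now closes the argument. Interpreting $P$ as the sub-stochastic transition matrix of a random walk on the alternatives that is killed with probability $1 - \sum_b P_{ab} > 0$ at each node, $G_{ca}$ is the expected number of visits to $a$ starting from $c$ before the walk is killed; equivalently, the unit of heat injected at $a$ diffuses along the edges weighted by $W$. By the strong Markov property, any visit to $a$ starting from $b$ must first reach $a$, after which the walk restarts afresh, giving $G_{ba} = f_{ba}\,G_{aa}$ with $f_{ba} = \mathbb{P}[\text{the walk from }b\text{ ever reaches }a]\le 1$; hence $G_{ba}\le G_{aa}$. A purely algebraic substitute, if one prefers to avoid probability, is a discrete maximum principle: the column $g = G_{\cdot a}$ solves $(I-P)g = e_a$, so $g_a = 1 + (Pg)_a \ge 1 > 0$, while for $c \ne a$ one has $g_c = \sum_d P_{cd}g_d \le \big(\sum_d P_{cd}\big)\max_d g_d < \max_d g_d$ whenever the maximum is positive; since row sums are $<1$ and $g \ge 0$, the maximum of $g$ can only be attained at $a$, giving $g_a = G_{aa} \ge G_{ba} = g_b$.

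I expect the main obstacle to be the rigorous justification of the strong-Markov decomposition $G_{ba}=f_{ba}G_{aa}$ (equivalently, pinning down the maximum principle with the strict inequalities needed to rule out a spurious off-diagonal maximum); everything else—the $M$-matrix decomposition, convergence of the Neumann series from strict diagonal dominance, and the symmetry reduction—is routine. A minor point to check is the degenerate case in which $a$ has no neighbors under $W$, where one verifies directly that $G_{ba}=0\le G_{aa}=1$.
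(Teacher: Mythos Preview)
Your argument is correct and mirrors the paper's: both factor $\Delta=D(I-P)$ with $P$ sub-stochastic, interpret the entries of $\Delta^{-1}$ (equivalently of $G=(I-P)^{-1}$) through a killed random walk on the alternatives, and deduce $G_{ba}\le G_{aa}$ from the first-passage/strong-Markov decomposition of paths from $b$ that visit $a$. Your discrete maximum-principle variant is a clean deterministic alternative that the paper does not include.
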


\begin{proof}
    We prove the result by interpreting the coefficients of $\Delta^{-1}$ as a probability
    of some sample path of a discrete-time Markov process on the alternatives.
    Let $D$ be the diagonal of $\Delta$, and $P$ the matrix defined by
    \begin{equation}
        \Delta = D(I - P).
    \end{equation}
    The matrix $P$ is a row-sub-stochastic matrix.
    Explicitly,
    \begin{align}
        P_{aa} &= 0, & P_{ab} &= \frac{|\Delta_{ab}|}{\Delta_{aa}}, & \sum_{b \in \cA} P_{ab} &< 1.
    \end{align}
    Let $\bullet$ be an extra symbol and $\cA_\bullet = \cA \sqcup \{\bullet\}$. Let $\kappa_a = \Delta_{aa} - \sum_{b \ne a} |\Delta_{ab}| > 0$.
    We define a Markovian random walk on $\cA_\bullet$ by the transition matrix $T$:
    \begin{align}
        T(b|a) &= P_{ab} = \frac{|\Delta_{ab}|}{\Delta_{aa}}, & T(\bullet|a) &= 1 - \sum_{b \in \cA} P_{ab} = \frac{\kappa_a}{\Delta_{aa}}, & T(a|\bullet) &= 0, & T(\bullet|\bullet) &= 1.
    \end{align}
    Intuitively, this Markovian process walks over the alternatives according to the weights $|\Delta_{ab}|$, and at each step has a non-zero probability to end in the cemetery $\bullet$.

    Now,
    \begin{align}
        \Delta^{-1}_{ba} 
        &= \big((1 - P)^{-1} D^{-1}\big)_{ba}, & \Delta^{-1}_{ba} &= \sum_{n \ge 0} \big(P^n D^{-1}\big)_{ba}, & \Delta^{-1}_{ba} &= \sum P_{b a_1} \dots P_{a_{n-1}a} \frac{1}{\Delta_{aa}}.
    \end{align}
    Therefore,
    \begin{equation}
        \Delta^{-1}_{ba}\kappa_a = \sum T(a_1|b) \dots T(a|a_{n-1}) T(\bullet|a).
    \end{equation}
    In other words, $\Delta^{-1}_{ba}\kappa_a$ is the probability that $a$ is the last alternative to be visited before the random walk is killed, given that it started at $b$. Notice that the alternative $a$ may be visited multiple times in those paths. Actually, any path that starts at $b$ and visits $a$ before being killed can be decomposed as the gluing of a path that starts at $b$ and reaches $a$, followed by a path that starts at $a$ and eventually revisits $a$ as its last step before being killed. Therefore,
    \begin{equation}
        \Delta^{-1}_{ba}\kappa_a \le \Delta^{-1}_{aa} \kappa_a.
    \end{equation}
    Since $\kappa_a > 0$, and since $\Delta$ is symmetric, we finally obtain
    \begin{equation}
        \Delta^{-1}_{aa} \ge \Delta^{-1}_{ab}.
    \end{equation}    
\end{proof}

\section{Proof of Theorem \ref{theo:onehotencoding}}
\label{app:theooneshot}

\begin{proof}
    Fix an arbitrary $\lambda > 0$,
    and let $\mu = s^2 + \lambda$.
    There exists a permutation matrix $P$,
    and an integer partition $A_1 + \dots + A_k = A$,
    such that
    \begin{align}
        X &\triangleq \begin{pmatrix}
           x^T & sI 
        \end{pmatrix}
        \begin{pmatrix}
           x \\ sI 
        \end{pmatrix}
        + \lambda I \\
        &= x^Tx + \mu I  \\
        &= P \cdot
        \Big(
            \mu I + \text{block\_diagonal}(J_{A_1},\dots,J_{A_k}) 
        \Big) \cdot P^{-1} \\
        &= P \cdot
            \text{block\_diagonal}(
            \mu I_{A_1} + J_{A_1},
            \dots,
            \mu I_{A_k} + J_{A_k}
            ) \cdot P^{-1}
    \end{align}
    where every $J_{A_i}$ is a matrix of size $A_i \times A_i$
    with all its entries set to $1$.
    Up to renaming the alternatives, we can assume,
    without loss of generality, that $P = I$.

    We notice that
    the all-one matrix $J$, say of size $A$, satisfies
    $J^2 = A J$, and
    \begin{equation}
        (\mu I + J)\frac{1}{\mu }\Big(I - \frac{1}{A+\mu}J\Big) = 1
    \end{equation}
    Therefore, $X$ is invertible and
    \begin{equation}
        X^{-1} = \text{block\_diagonal}
        \Big(
        \frac{1}{\mu}\big(I_{A_1} - \frac{1}{A_1 + \mu}J_{A_1}\big),
        \dots,
        \frac{1}{\mu}\big(I_{A_k} - \frac{1}{A_k + \mu}J_{A_k}\big)
        \Big)
    \end{equation}
    Thus, $X^{-1}$ is super-Laplacian.
    This proves that $\begin{pmatrix} x & sI \end{pmatrix}^T$ is a diffusion embedding.
    The monotonicity of $\GBT_{f,\sigma,x,L}$ follows 
    from Theorem~\ref{th:diffusion-embedding-is-good}.
\end{proof}

\newpage
\section*{NeurIPS Paper Checklist}

\begin{enumerate}

\item {\bf Claims}
    \item[] Question: Do the main claims made in the abstract and introduction accurately reflect the paper's contributions and scope?
    \item[] Answer: \answerYes{} %
    \item[] Justification: The introduction summarizes the claims in the ``Contributions'' paragraph, with references to the mathematical statements in the rest of the paper.
    \item[] Guidelines:
    \begin{itemize}
        \item The answer NA means that the abstract and introduction do not include the claims made in the paper.
        \item The abstract and/or introduction should clearly state the claims made, including the contributions made in the paper and important assumptions and limitations. A No or NA answer to this question will not be perceived well by the reviewers.
        \item The claims made should match theoretical and experimental results, and reflect how much the results can be expected to generalize to other settings.
        \item It is fine to include aspirational goals as motivation as long as it is clear that these goals are not attained by the paper.
    \end{itemize}

\item {\bf Limitations}
    \item[] Question: Does the paper discuss the limitations of the work performed by the authors?
    \item[] Answer: \answerYes{} %
    \item[] Justification: We discuss limitations of this work in a dedicated paragraph of the Conclusion section.
    \item[] Guidelines:
    \begin{itemize}
        \item The answer NA means that the paper has no limitation while the answer No means that the paper has limitations, but those are not discussed in the paper.
        \item The authors are encouraged to create a separate "Limitations" section in their paper.
        \item The paper should point out any strong assumptions and how robust the results are to violations of these assumptions (e.g., independence assumptions, noiseless settings, model well-specification, asymptotic approximations only holding locally). The authors should reflect on how these assumptions might be violated in practice and what the implications would be.
        \item The authors should reflect on the scope of the claims made, e.g., if the approach was only tested on a few datasets or with a few runs. In general, empirical results often depend on implicit assumptions, which should be articulated.
        \item The authors should reflect on the factors that influence the performance of the approach. For example, a facial recognition algorithm may perform poorly when image resolution is low or images are taken in low lighting. Or a speech-to-text system might not be used reliably to provide closed captions for online lectures because it fails to handle technical jargon.
        \item The authors should discuss the computational efficiency of the proposed algorithms and how they scale with dataset size.
        \item If applicable, the authors should discuss possible limitations of their approach to address problems of privacy and fairness.
        \item While the authors might fear that complete honesty about limitations might be used by reviewers as grounds for rejection, a worse outcome might be that reviewers discover limitations that aren't acknowledged in the paper. The authors should use their best judgment and recognize that individual actions in favor of transparency play an important role in developing norms that preserve the integrity of the community. Reviewers will be specifically instructed to not penalize honesty concerning limitations.
    \end{itemize}

\item {\bf Theory Assumptions and Proofs}
    \item[] Question: For each theoretical result, does the paper provide the full set of assumptions and a complete (and correct) proof?
    \item[] Answer: \answerYes{} %
    \item[] Justification: Each mathematical statement has a proof, either in the main body or the appendix. Pointers to the location of proofs are provided.
    \item[] Guidelines:
    \begin{itemize}
        \item The answer NA means that the paper does not include theoretical results.
        \item All the theorems, formulas, and proofs in the paper should be numbered and cross-referenced.
        \item All assumptions should be clearly stated or referenced in the statement of any theorems.
        \item The proofs can either appear in the main paper or the supplemental material, but if they appear in the supplemental material, the authors are encouraged to provide a short proof sketch to provide intuition.
        \item Inversely, any informal proof provided in the core of the paper should be complemented by formal proofs provided in appendix or supplemental material.
        \item Theorems and Lemmas that the proof relies upon should be properly referenced.
    \end{itemize}

    \item {\bf Experimental Result Reproducibility}
    \item[] Question: Does the paper fully disclose all the information needed to reproduce the main experimental results of the paper to the extent that it affects the main claims and/or conclusions of the paper (regardless of whether the code and data are provided or not)?
    \item[] Answer: \answerYes{} %
    \item[] Justification: 
    All the code and instructions necessary to reproduce the experiments are provided in the Supplementary Material. Section 5 also provides a detailed explanation of each step of the numerical experiments, along with the computing environment.
    \item[] Guidelines:
    \begin{itemize}
        \item The answer NA means that the paper does not include experiments.
        \item If the paper includes experiments, a No answer to this question will not be perceived well by the reviewers: Making the paper reproducible is important, regardless of whether the code and data are provided or not.
        \item If the contribution is a dataset and/or model, the authors should describe the steps taken to make their results reproducible or verifiable.
        \item Depending on the contribution, reproducibility can be accomplished in various ways. For example, if the contribution is a novel architecture, describing the architecture fully might suffice, or if the contribution is a specific model and empirical evaluation, it may be necessary to either make it possible for others to replicate the model with the same dataset, or provide access to the model. In general. releasing code and data is often one good way to accomplish this, but reproducibility can also be provided via detailed instructions for how to replicate the results, access to a hosted model (e.g., in the case of a large language model), releasing of a model checkpoint, or other means that are appropriate to the research performed.
        \item While NeurIPS does not require releasing code, the conference does require all submissions to provide some reasonable avenue for reproducibility, which may depend on the nature of the contribution. For example
        \begin{enumerate}
            \item If the contribution is primarily a new algorithm, the paper should make it clear how to reproduce that algorithm.
            \item If the contribution is primarily a new model architecture, the paper should describe the architecture clearly and fully.
            \item If the contribution is a new model (e.g., a large language model), then there should either be a way to access this model for reproducing the results or a way to reproduce the model (e.g., with an open-source dataset or instructions for how to construct the dataset).
            \item We recognize that reproducibility may be tricky in some cases, in which case authors are welcome to describe the particular way they provide for reproducibility. In the case of closed-source models, it may be that access to the model is limited in some way (e.g., to registered users), but it should be possible for other researchers to have some path to reproducing or verifying the results.
        \end{enumerate}
    \end{itemize}

\item {\bf Open access to data and code}
    \item[] Question: Does the paper provide open access to the data and code, with sufficient instructions to faithfully reproduce the main experimental results, as described in supplemental material?
    \item[] Answer: \answerYes{} %
    \item[] Justification: 
    Again, all the code and instructions necessary to reproduce the experiments are provided in the Supplementary Material.
    The code will be uploaded on GitHub if the paper is accepted.
    \item[] Guidelines:
    \begin{itemize}
        \item The answer NA means that paper does not include experiments requiring code.
        \item Please see the NeurIPS code and data submission guidelines (\url{https://nips.cc/public/guides/CodeSubmissionPolicy}) for more details.
        \item While we encourage the release of code and data, we understand that this might not be possible, so “No” is an acceptable answer. Papers cannot be rejected simply for not including code, unless this is central to the contribution (e.g., for a new open-source benchmark).
        \item The instructions should contain the exact command and environment needed to run to reproduce the results. See the NeurIPS code and data submission guidelines (\url{https://nips.cc/public/guides/CodeSubmissionPolicy}) for more details.
        \item The authors should provide instructions on data access and preparation, including how to access the raw data, preprocessed data, intermediate data, and generated data, etc.
        \item The authors should provide scripts to reproduce all experimental results for the new proposed method and baselines. If only a subset of experiments are reproducible, they should state which ones are omitted from the script and why.
        \item At submission time, to preserve anonymity, the authors should release anonymized versions (if applicable).
        \item Providing as much information as possible in supplemental material (appended to the paper) is recommended, but including URLs to data and code is permitted.
    \end{itemize}

\item {\bf Experimental Setting/Details}
    \item[] Question: Does the paper specify all the training and test details (e.g., data splits, hyperparameters, how they were chosen, type of optimizer, etc.) necessary to understand the results?
    \item[] Answer: \answerYes{} %
    \item[] Justification: A description of the setup used to provide Fig. 1 and 2 is provided in Section 5.
    \item[] Guidelines:
    \begin{itemize}
        \item The answer NA means that the paper does not include experiments.
        \item The experimental setting should be presented in the core of the paper to a level of detail that is necessary to appreciate the results and make sense of them.
        \item The full details can be provided either with the code, in appendix, or as supplemental material.
    \end{itemize}

\item {\bf Experiment Statistical Significance}
    \item[] Question: Does the paper report error bars suitably and correctly defined or other appropriate information about the statistical significance of the experiments?
    \item[] Answer: \answerNo{} %
    \item[] Justification: The plots provide the normalized mean squared error of the estimators, averaged over a high number of repetitions (100 repetitions). As such, it is unclear what additional insight variance metrics would bring to the experiments.
    \item[] Guidelines:
    \begin{itemize}
        \item The answer NA means that the paper does not include experiments.
        \item The authors should answer "Yes" if the results are accompanied by error bars, confidence intervals, or statistical significance tests, at least for the experiments that support the main claims of the paper.
        \item The factors of variability that the error bars are capturing should be clearly stated (for example, train/test split, initialization, random drawing of some parameter, or overall run with given experimental conditions).
        \item The method for calculating the error bars should be explained (closed form formula, call to a library function, bootstrap, etc.)
        \item The assumptions made should be given (e.g., Normally distributed errors).
        \item It should be clear whether the error bar is the standard deviation or the standard error of the mean.
        \item It is OK to report 1-sigma error bars, but one should state it. The authors should preferably report a 2-sigma error bar than state that they have a 96\% CI, if the hypothesis of Normality of errors is not verified.
        \item For asymmetric distributions, the authors should be careful not to show in tables or figures symmetric error bars that would yield results that are out of range (e.g. negative error rates).
        \item If error bars are reported in tables or plots, The authors should explain in the text how they were calculated and reference the corresponding figures or tables in the text.
    \end{itemize}

\item {\bf Experiments Compute Resources}
    \item[] Question: For each experiment, does the paper provide sufficient information on the computer resources (type of compute workers, memory, time of execution) needed to reproduce the experiments?
    \item[] Answer: \answerYes{} %
    \item[] Justification: The experiments require little computing power, details on RAM, CPU are provided in Section 5.
    \item[] Guidelines:
    \begin{itemize}
        \item The answer NA means that the paper does not include experiments.
        \item The paper should indicate the type of compute workers CPU or GPU, internal cluster, or cloud provider, including relevant memory and storage.
        \item The paper should provide the amount of compute required for each of the individual experimental runs as well as estimate the total compute.
        \item The paper should disclose whether the full research project required more compute than the experiments reported in the paper (e.g., preliminary or failed experiments that didn't make it into the paper).
    \end{itemize}

\item {\bf Code Of Ethics}
    \item[] Question: Does the research conducted in the paper conform, in every respect, with the NeurIPS Code of Ethics \url{https://neurips.cc/public/EthicsGuidelines}?
    \item[] Answer: \answerYes{} %
    \item[] Justification: 
    \item[] Guidelines:
    We reviewed the NeurIPS Code of Ethics, and found that none of the problematic cases in it conform with the numerical experiments.
    \begin{itemize}
        \item The answer NA means that the authors have not reviewed the NeurIPS Code of Ethics.
        \item If the authors answer No, they should explain the special circumstances that require a deviation from the Code of Ethics.
        \item The authors should make sure to preserve anonymity (e.g., if there is a special consideration due to laws or regulations in their jurisdiction).
    \end{itemize}

\item {\bf Broader Impacts}
    \item[] Question: Does the paper discuss both potential positive societal impacts and negative societal impacts of the work performed?
    \item[] Answer: \answerYes{} %
    \item[] Justification:
    We mention broader impacts both at the end of the Related Works section of the Introduction, and in the limitations paragraph of the Conclusion
    
    \item[] Guidelines:
    \begin{itemize}
        \item The answer NA means that there is no societal impact of the work performed.
        \item If the authors answer NA or No, they should explain why their work has no societal impact or why the paper does not address societal impact.
        \item Examples of negative societal impacts include potential malicious or unintended uses (e.g., disinformation, generating fake profiles, surveillance), fairness considerations (e.g., deployment of technologies that could make decisions that unfairly impact specific groups), privacy considerations, and security considerations.
        \item The conference expects that many papers will be foundational research and not tied to particular applications, let alone deployments. However, if there is a direct path to any negative applications, the authors should point it out. For example, it is legitimate to point out that an improvement in the quality of generative models could be used to generate deepfakes for disinformation. On the other hand, it is not needed to point out that a generic algorithm for optimizing neural networks could enable people to train models that generate Deepfakes faster.
        \item The authors should consider possible harms that could arise when the technology is being used as intended and functioning correctly, harms that could arise when the technology is being used as intended but gives incorrect results, and harms following from (intentional or unintentional) misuse of the technology.
        \item If there are negative societal impacts, the authors could also discuss possible mitigation strategies (e.g., gated release of models, providing defenses in addition to attacks, mechanisms for monitoring misuse, mechanisms to monitor how a system learns from feedback over time, improving the efficiency and accessibility of ML).
    \end{itemize}

\item {\bf Safeguards}
    \item[] Question: Does the paper describe safeguards that have been put in place for responsible release of data or models that have a high risk for misuse (e.g., pretrained language models, image generators, or scraped datasets)?
    \item[] Answer: \answerNA{} %
    \item[] Justification: : The paper does not release any new data or new models.
    \item[] Guidelines:
    \begin{itemize}
        \item The answer NA means that the paper poses no such risks.
        \item Released models that have a high risk for misuse or dual-use should be released with necessary safeguards to allow for controlled use of the model, for example by requiring that users adhere to usage guidelines or restrictions to access the model or implementing safety filters.
        \item Datasets that have been scraped from the Internet could pose safety risks. The authors should describe how they avoided releasing unsafe images.
        \item We recognize that providing effective safeguards is challenging, and many papers do not require this, but we encourage authors to take this into account and make a best faith effort.
    \end{itemize}

\item {\bf Licenses for existing assets}
    \item[] Question: Are the creators or original owners of assets (e.g., code, data, models), used in the paper, properly credited and are the license and terms of use explicitly mentioned and properly respected?
    \item[] Answer: \answerNA{} %
    \item[] Justification: The paper does not rely on existing assets, all experiments are based on synthetic data.
    \item[] Guidelines:
    \begin{itemize}
        \item The answer NA means that the paper does not use existing assets.
        \item The authors should cite the original paper that produced the code package or dataset.
        \item The authors should state which version of the asset is used and, if possible, include a URL.
        \item The name of the license (e.g., CC-BY 4.0) should be included for each asset.
        \item For scraped data from a particular source (e.g., website), the copyright and terms of service of that source should be provided.
        \item If assets are released, the license, copyright information, and terms of use in the package should be provided. For popular datasets, \url{paperswithcode.com/datasets} has curated licenses for some datasets. Their licensing guide can help determine the license of a dataset.
        \item For existing datasets that are re-packaged, both the original license and the license of the derived asset (if it has changed) should be provided.
        \item If this information is not available online, the authors are encouraged to reach out to the asset's creators.
    \end{itemize}

\item {\bf New Assets}
    \item[] Question: Are new assets introduced in the paper well documented and is the documentation provided alongside the assets?
    \item[] Answer: \answerNA{} %
    \item[] Justification: The paper does not introduce any new assets.
    \item[] Guidelines:
    \begin{itemize}
        \item The answer NA means that the paper does not release new assets.
        \item Researchers should communicate the details of the dataset/code/model as part of their submissions via structured templates. This includes details about training, license, limitations, etc.
        \item The paper should discuss whether and how consent was obtained from people whose asset is used.
        \item At submission time, remember to anonymize your assets (if applicable). You can either create an anonymized URL or include an anonymized zip file.
    \end{itemize}

\item {\bf Crowdsourcing and Research with Human Subjects}
    \item[] Question: For crowdsourcing experiments and research with human subjects, does the paper include the full text of instructions given to participants and screenshots, if applicable, as well as details about compensation (if any)?
    \item[] Answer: \answerNA{} %
    \item[] Justification: The paper does not conduct crowdsourcing and research with human subjects.
    \item[] Guidelines:
    \begin{itemize}
        \item The answer NA means that the paper does not involve crowdsourcing nor research with human subjects.
        \item Including this information in the supplemental material is fine, but if the main contribution of the paper involves human subjects, then as much detail as possible should be included in the main paper.
        \item According to the NeurIPS Code of Ethics, workers involved in data collection, curation, or other labor should be paid at least the minimum wage in the country of the data collector.
    \end{itemize}

\item {\bf Institutional Review Board (IRB) Approvals or Equivalent for Research with Human Subjects}
    \item[] Question: Does the paper describe potential risks incurred by study participants, whether such risks were disclosed to the subjects, and whether Institutional Review Board (IRB) approvals (or an equivalent approval/review based on the requirements of your country or institution) were obtained?
    \item[] Answer: \answerNA{} %
    \item[] Justification:  Again, the paper does not conduct research with human subjects.
    \item[] Guidelines:
    \begin{itemize}
        \item The answer NA means that the paper does not involve crowdsourcing nor research with human subjects.
        \item Depending on the country in which research is conducted, IRB approval (or equivalent) may be required for any human subjects research. If you obtained IRB approval, you should clearly state this in the paper.
        \item We recognize that the procedures for this may vary significantly between institutions and locations, and we expect authors to adhere to the NeurIPS Code of Ethics and the guidelines for their institution.
        \item For initial submissions, do not include any information that would break anonymity (if applicable), such as the institution conducting the review.
    \end{itemize}

\item {\bf Declaration of LLM usage}
    \item[] Question: Does the paper describe the usage of LLMs if it is an important, original, or non-standard component of the core methods in this research? Note that if the LLM is used only for writing, editing, or formatting purposes and does not impact the core methodology, scientific rigorousness, or originality of the research, declaration is not required.
    \item[] Answer: \answerNA{} %
    \item[] Justification: The core method development in this research does not involve LLMs, nor does the writing of the paper.
    \item[] Guidelines:
    \begin{itemize}
        \item The answer NA means that the core method development in this research does not involve LLMs as any important, original, or non-standard components.
        \item Please refer to our LLM policy (\url{https://neurips.cc/Conferences/2025/LLM}) for what should or should not be described.
    \end{itemize}

\end{enumerate}

\end{document}